\appto\remark{\leftskip\parindent}
\renewcommand\Pr{\operatorname{P}}
\newcommand\blfootnote[1]{%
  \begingroup\renewcommand\thefootnote\relax\footnotetext{#1}%
  \addtocounter{footnote}{-1}\endgroup}
\begin{document}
\def\itemautorefname~{}
\newcommand\todo[1]{}

\stepcounter{footnote}
\stepcounter{footnote}
\edef\authum{\thefootnote}
\stepcounter{footnote}
\edef\authmr{\thefootnote}
\stepcounter{footnote}
\edef\authgo{\thefootnote}
\setcounter{footnote}{0}
\makeatletter
\newcommand\authmark[1]{\textsuperscript{\@fnsymbol#1}}
\makeatother

\title{Shortest Path through Random Points}
\author{Sung Jin Hwang\authmark\authum\authmark\authgo \and Steven B. Damelin\authmark\authmr \and Alfred O. Hero III\authmark\authum}
\date{\authmark\authum University of Michigan\\ \authmark\authmr Mathematical Reviews}
\maketitle
\blfootnote{\textit{AMS 2000 subject classifications:} Primary 60F15; secondary 60C05, 53B21}
\blfootnote{\textit{Keywords and phrases:} shortest path, power-weighted graph, Riemannian geometry, conformal metric}
\blfootnote{\authmark\authum This work was partially supported by ARO grant W911NF-09-1-0310 and NSF grant CCF-1217880.}
\blfootnote{\authmark\authmr This work was supported, in part, by American Mathematical Society and grant: \enquote{Computational Research Initiative in Imaging and Remote Sensing-8-2011,} Center for High Performance Computing, South Africa.}
\blfootnote{\authmark\authgo Currently at Google Inc.}

%\thanks{University of Michigan}
%\thanks{Mathematical Reviews}
%\address{1301 Beal Ave.\ 4417 EECS\\ Ann Arbor, MI 48109-2122\\ \printead{e3}\\ \phantom{E-mail:\ }\printead*{e1}}
%\address{416 Fourth St.\\ Ann Arbor, MI 48103\\ \printead{e2}}

\begin{abstract}
\ Let $(M,g_1)$ be a complete $d$-dimensional Riemannian manifold for $d > 1$.  Let $\cX_n$ be a set of $n$ sample points in $M$ drawn randomly from a smooth Lebesgue density $f$ supported in $M$.  Let $x,y$ be two points in $M$.  We prove that the normalized length of the power-weighted shortest path between $x,y$ through $\cX_n$ converges almost surely to a constant multiple of the Riemannian distance between $x,y$ under the metric tensor $g_p = f^{2(1-p)/d} g_1$, where $p > 1$ is the power parameter.  %The result is an extension of the Beardwood-Halton-Hammersley theorem to shortest paths.
\end{abstract}

\section{Introduction}
The shortest path problem (see e.g., \textcite{Cormen:2009a,Dijkstra:1959a}) is of interest both in theory and in applications since it naturally arises in combinatorial optimization problems, such as optimal routing in communication networks, and efficient algorithms exist to solve the problem.  In this paper, we are interested in the shortest paths over random sample points embedded in Euclidean and Riemannian spaces.

Many graph structures over Euclidean sample points have been studied in the context of Beardwood-Halton-Hammersley (BHH) theorem and its extensions.  The BHH theorem states that the law of large numbers (LLN) holds for certain spanning graphs over random samples.  Such graph structures include the travelling salesman path (TSP), the minimal spanning tree (MST), and the nearest neighbor graphs ($k$-NNG).  See \textcite{Steele:1997a} and \textcite{Yukich:1998a}.  The BHH theorem applies to graphs that span all of the points in the random sample.  This paper establishes a BHH-type theorem for shortest paths between any two points.

In the last few years, the asymptotic theory for spanning graphs such as the MST, the $k$-NNG, and the TSP has been extended to the Riemannian case, e.g., \textcite{Costa:2004a} extended the MST asymptotics in the context of entropy and intrinsic dimensionality estimation.  More general non-Euclidean extensions have been established by \textcite{Penrose:2011a}.  This paper extends the BHH theorem in a different direction: the shortest path between random points in a Riemannian manifold.

The asymptotic properties of paths through random Euclidean sample points have been studied mainly in first-passage percolation (FPP) models \autocite{Hammersley:1966a}.  Shortest paths have been studied in FPP models in the context of first passage time or travel time with lattice models \autocite{Kesten:1987a} or (homogeneous) continuum models \autocite{Howard:1997a}.  Under the FPP lattice model, \textcite{LaGatta:2010a} extended these results to the non-Euclidean case where interpoint distances are determined by a translation-invariant random Riemannian metric in $\reals^d$.  This paper makes a contribution in a different direction.  We assume a non-homogeneous continuum model and establish convergence of the shortest path lengths to density-dependent deformed Riemannian distances. The convergent limit reduces to the result of \textcite{Howard:2001a} when specialized to a homogeneous Euclidean continuum model.

\section{Main results}
In this paper, a smooth function is an infinitely differentiable function, i.e., $f \in \Cinfty$.
A smooth manifold means its transition maps are smooth.

Let $(M,g_1)$ be a smooth $d$-dimensional Riemannian manifold without boundary with Riemannian metric tensor $g_1$ and $d > 1$.
Recall that a Riemannian metric tensor, often simply called a Riemannian metric, on a manifold is a family of positive definite inner products on the tangent spaces of the manifold.
When $M = \reals^d$, $g_1$ is the standard Euclidean inner product.
The use of the subscript on $g_1$ will become clear shortly.

Consider a probability space $(M,\BbbB,\Pr)$ where $\Pr$ is a probability distribution over Borel subsets $\BbbB$ of the sample space $M$.  Assume that the distribution has a Lebesgue probability density function (pdf) $f$ with respect to $g_1$.  Let $X_1,X_2,\dots$ denote an i.i.d.\ sequence drawn from this density and let the first $n$ samples from this sequence be denoted by $\cX_n = \braces{X_1,\dots,X_n}$.  The sequence $\cX_n$ will be associated with the nodes in a undirected simple graph whose edges have weight equal to the power weighted Euclidean distance between pairs of nodes.  We will use indexing by $n$ of a generic non-random point $x_n \in M$.  This point is not related in any way to the random variable $X_n$.  For realizations, we will use the notation $X_n(\omega)$ where $\omega$ is an elementary outcome in the sample space.

For $p > 1$, called the power parameter, define a new metric tensor $g_p = f^{2(1-p)/d} g_1$.  That is, if $Z_x$ and $W_x$ are two tangent vectors at a point $x \in M$, then $g_p(Z_x,W_x) = f(x)^{2(1-p)/d} g_1(Z_x,W_x)$.  The deformed metric tensor $g_p$ is well-defined for every $x$ with $f(x) > 0$, and $g_p$ is a Riemannian metric tensor when $f \in \Cinfty$.  In this paper, we assume $p > 1$ except for a few places where we compare with the undeformed case $p = 1$.

The main result of this paper, stated as \autoref{thm:main}, establishes an asymptotic limit of the lengths of the shortest paths through locally finite point processes.  A subset $A \subset M$ is locally finite if $A \cap B$ is finite for every $B \subset M$ of finite volume.  For example, a homogeneous Poisson process in $\reals^d$ is locally finite with probability one.  For $x,y \in M$ and locally finite $A \subset M$, let $L(x,y; A)$ denote the power-weighted shortest path length from $x$ to $y$ through $A \cup \braces{x,y}$.  Let the edge weight between two points $u$ and $v$ be defined as $\dist_1(u,v)^p$ where $\dist_1$ denotes the Riemannian distance under the metric tensor $g_1$.
A path $\pi$ through points $x_0,\dots,x_k$ has power-weighted length $\sum_{i=0}^{k-1} \dist_1(x_i,x_{i+1})^p$.
%For convenience, we use the shorthand notations $L_n(x,y)$ for $L(x,y; \cX_n)$, and $\cL_\lambda(x,y)$ for $L(x,y; \cH_\lambda)$ where $\cH_\lambda$ denotes a homogeneous Poisson point process of intensity $\lambda > 0$ in $\reals^d$.  $\cL_\lambda(x,y)$ will be studied in depth in \autoref{sec:size-mean-proof}.

For $x \in M$ and $r > 0$, we denote by $B(x; r)$ the open ball in $M$ of radius $r$ centered at $x$, i.e., $B(x; r) = \braces{u \in M \colon \dist_1(x,u) < r}$.

\subsection{Main result}
Let $\dist_p$ denote the deformed distance under $g_p$,
\begin{equation}\label{eq:distp_def}
    \dist_p(x,y) = \inf_\gamma \int_0^1 f(\gamma_t)^{(1-p)/d} \sqrt{g_1(\gamma_t', \gamma_t')} \,dt,
\end{equation}
where the infimum is taken over all piecewise smooth curves $\gamma\colon [0,1] \to M$ such that $\gamma_0 = x$ and $\gamma_1 = y$.  When a curve achieves the infimum, we call the curve a $g_p$-geodesic.

The following is the main result of this paper.

\begin{theorem}\label{thm:main}
Assume that $M$ is compact, and that $f$ is continuous with $\inf_M f > 0$.
There exists a constant $C(d,p) > 0$, which only depends on $d$ and $p$, satisfying the following.
Let $b > 0$ and $\vareps > 0$.
Then there exists $\theta_0 > 0$ such that
\begin{equation*}
  \Pr \paren*{ \sup_{x,y} \abs*{ \frac{L(x,y; \cX_n)}{n^{(1-p)/d} \dist_p(x,y)} - C(d,p) } > \vareps }
  \leq \exp\paren[1]{ -\theta_0 n^{1/(d+2p)} },
\end{equation*}
for all sufficiently large $n$, where the supremum is taken over $x,y \in M$ with $\dist_1(x,y) \geq b$.
%The limit supremum value above is bounded above by $-\theta_0 (\inf_M f)^{1/(d+2p)} < 0$, where $\theta_0 = \theta_0(d,p,\vareps)$ is a constant that only depends on $d$, $p$, and $\vareps$.
\end{theorem}

The constant $C(d,p)$ is fixed throughout this paper (This is the same constant that is denoted as $\mu$ in \textcite{Howard:1997a,Howard:2001a}).
When $p = 1$, there is no power-weighting of the edges, and $C(d,1) = 1$.

%\newtheorem*{remark*}{Remark}
%\autoref{thm:main} translates to the following.
%There is some constant $\theta_0 > 0$ such that the probability that there exists a pair $x,y \in M$ with $\dist_1(x,y) \geq b > 0$ satisfying
%\begin{equation*}
%    \abs*{ \frac{L(x,y; \cX_n)}{n^{(1-p)/d} \dist_p(x,y)} - C(d,p) } > \vareps
%\end{equation*}
%is less than or equal to $\exp(-\theta_0 n^{1/(d+2p)})$ for all sufficiently large $n$.
%In particular
%\autoref{thm:main} implies that $n^{(p-1)/d} L(x,y; \cX_n)$ converges almost surely to $C(d,p) \dist_p(x,y)$ for every pair $x,y \in M$.
%
The requirement $\dist_1(x,y) \geq b > 0$ can be relaxed.
The probability upper bound $\exp(-\theta_0 n^{1/(d+2p)})$ can be written as $\exp(-\theta_0^\prime (n r_n^d)^{1/(d+2p)} + O(\log n))$ where $\theta_0^\prime = \theta_0 b^{-d/(d+2p)}$ and where $x,y$ is constrained to satisfy $\dist_1(x,y) \geq r_n$ for some positive sequence $r_n$.
Therefore for the probability upper bound to be non-trivial, $n r_n^d / \log n$ must go to infinity.
The separation requirement $\dist_1(x,y) \geq b > 0$ is one sufficient condition that ensures this property.
%
%The requirement $\dist_1(x,y) \geq (n \inf f)^{-b/d}$ for $0 \leq b < 1$ states that $(n \inf f) \dist_1(x,y)^d \to \infty$ in a polynomial order as $n \to \infty$, i.e., sub-polynomial growth is not allowed.
%The condition that $(n \inf f) \abs{x-y}^d \to \infty$ is consistent with conditions appearing in other literatures such as bandwidth selection for kernel density estimation or nearest-neighbor classifier.
%The number of sample points inside the ball $B(x; \abs{x-y})$ follows a binomial distribution with mean lower-bounded by $(n \inf f) V_d \abs{x-y}^d$ where $V_d$ denotes the volume of a unit ball.
%That is, if the condition does not hold, then there is not enough sample points around paths from $x$ to $y$ for any statistical asymptotics to manifest.
%The requirment that $(n \inf f) \abs{x-y}^d$ to have polynomial growth is due to technical reasons.

A similar convergence result holds when $M$ is complete, but not necessarily compact, giving the almost sure limit stated below.

\begin{theorem}\label{thm:main-complete}
Assume that $M$ is complete and that $f$ is continuous with $f(u) > 0$ for all $u \in M$.  Fix $x,y \in M$.  Then
\begin{equation*}
    \lim_{n\to\infty} n^{(p-1)/d} L(x,y; \cX_n) = C(d,p) \dist_p(x,y) \quad\text{a.s.}
\end{equation*}
The constant $C(d,p)$ is the same constant as in \autoref{thm:main}.
\end{theorem}

\newtheorem*{remark*}{Remark}
\begin{remark*}
In the case where the pdf $f \in \Cinfty$, then the deformed metric tensor $g_p$ is a Riemannian metric tensor, and $\dist_p$ is a Riemannian distance.  \autoref{thm:main} and \autoref{thm:main-complete} connect an algorithmic quantity, power-weighted shortest path lengths, to a geometric quantity, Riemannian distances.
\end{remark*}

\subsection{Discussion}
We use shorthand notation $\cL_\lambda(x,y)$ to denote $L(x,y; \cH_\lambda)$ where $\cH_\lambda$ is a homogeneous Poisson point process of intensity $\lambda > 0$ in $\reals^d$.

\autoref{thm:main} and \autoref{thm:main-complete} can be compared to analogous results in the continuum FPP model of \textcite{Howard:2001a}.  The main differences are the following:  \begin{enumerate*}[label=(\roman*)] \item the results of \textcite{Howard:2001a} are restricted to the case of uniformly distributed node locations $\cH_\lambda$ in Euclidean spaces while our results also hold for the case of non-uniformly distributed points in compact or complete manifolds; \item our convergence rates improve upon those of \textcite{Howard:2001a}.\end{enumerate*}

Specifically, \textcite[Theorem~2.2]{Howard:2001a} establish a bound on the shortest path lengths in a homogeneous Poisson point process.  Recall that $\cL_\lambda(x,y)$ denotes the power-weighted shortest path length from $x \in \reals^d$ to $y \in \reals^d$ through random nodes in a homogeneous Poisson point process $\cH_\lambda$ of intensity $\lambda > 0$.

\textcite[Theorem~2.2]{Howard:2001a} state the following.  Let $\kappa_1 = \min(1,d/p)$, $\kappa_2 = 1/(4p+3)$, and $e_1 \in \reals^d$ be a unit vector.  For any $0 < b < \kappa_2$, there exists a constant $C_0$ (depending on $b$) such that for $t > 0$ and $t^b \leq s \sqrt{t} \leq t^{\kappa_2 - b}$,
\begin{equation}\label{eq:howard-ineq1}
    \Pr \paren*{\abs*{\frac1t \cL_1(0,te_1) - C(d,p)} > s}
    \leq \exp\paren[1]{-C_0 (s \sqrt{t})^{\kappa_1}}.
\end{equation}

Note that the bound in \eqref{eq:howard-ineq1} decays to zero no faster than $\exp(-C_0 t^{\kappa_1\kappa_2})$ where $\kappa_1\kappa_2 = t^{\min(1,d/p)/(4p+3)}$.

On the other hand, for arbitrary (uniform or non-uniform) density, our \autoref{thm:euc-unif-local} implies, after simple Poissonization of the sequence $\cX_n$ that there exists some $\theta > 0$ such that (see the appendix)
\begin{equation}\label{eq:main-cor}
  \Pr \paren*{ \abs*{ \frac1t \cL_1(0,te_1) - C(d,p) } > s }
  \leq \exp\paren[1]{-\theta t^{d/(d+2p)}}
\end{equation}
for all sufficiently large $t$.
Therefore the decay is exponential in $t^{d/(d+2p)}$.
Under the condition $d \geq 1$ and $p > 1$, the decay rate \eqref{eq:main-cor} is faster than the rate \eqref{eq:howard-ineq1}.

It is interesting to compare \autoref{thm:main} with BHH results.  The convergence result established in this paper differs from previous BHH theorems in two ways.  The first difference is that \autoref{thm:main} specifies a limit of the shortest path through $\cX_n$ while BHH theory \autocite{Steele:1997a,Yukich:1998a} specifies limits of the total length of a graph spanning $\cX_n$, e.g., the minimal spanning tree (MST) or the solution to the traveling salesman problem (TSP).  The second difference is that the shortest path has fixed anchor points, hence it is not translation-invariant.  This is in contrast to BHH theory developed in \textcite{Penrose:2003b} and \textcite{Penrose:2011a} where Euclidean functionals are generalized to locally stable functionals while the translation-invariance requirement is maintained.

\section{Main proofs}\label{sec:proofs}
An obvious but important property of $L(x,y; A)$ for $x,y \in M$ and locally finite $A \subset M$ is that if $A^\prime \subset A$ then $L(x,y; A) \leq L(x,y; A^\prime)$.  This property is used in several places in the proofs.

We define the constant $\alpha = 1 / (d+2p)$ that is used throughout the paper.

\subsection{Local convergence results}
\autoref{thm:main} states a convergence result applying to random variables in Riemannian manifolds.  \autoref{thm:main} is obtained by an extension of a simpler theorem on Euclidean space.

We first prove an upper bound for shortest path edge lengths.
% Recall that $\alpha = 1/(d+2p)$.

\begin{lemma}\label{thm:euc-shortlink}
Let $z \in \reals^d$ and $R > 0$.
Assume that $\cX_n$ is i.i.d.\ in $\reals^d$ with pdf $f$, and that $f_m  = \inf\, \braces{ f(u) \colon u \in B(z; R) }$ is strictly positive.
Fix $b > 0$.

Define the event $H_n(i,j)$ for each pair $1 \leq i \neq j \leq n$ as the intersection of the following events \begin{enumerate}[label=(\roman*)] \item both $X_i$ and $X_j$ are in $B(z; R)$, \item $\abs{X_i - X_j} > b^\alpha (n f_m)^{(\alpha-1)/d}$, and \item the shortest path from $X_i$ to $X_j$ over $\cX_n$ contains no sample point $X_k$ other than $X_i$ and $X_j$.\end{enumerate}

Let $F_n = \bigcap_{i,j} \paren[1]{H_n(i,j)^c}$, where the superscript $c$ denotes set complement.
Then there exists a constant $\theta_1 > 0$ such that
\begin{equation*}
  1 - \Pr(F_n)
  \leq \exp\paren[1]{-\theta_1 n^\alpha}
\end{equation*}
for all sufficiently large $n$.
\end{lemma}

\begin{proof}%[Proof of \autoref{thm:euc-shortlink}]
Define $h(X_i,X_j; \cdot)\colon \reals^d \to \reals$,
\begin{equation}\label{eq:h-def}
    h(X_i,X_j; u) = \abs{X_i - u}^p + \abs{X_j - u}^p - \abs{X_i-X_j}^p,
\end{equation}
and let $\Theta(X_i,X_j) = \braces{u \in \reals^d \colon h(X_i,X_j; u) < 0}$.  Note that if $X_k \in \Theta(X_i,X_j)$, then $X_i \to X_k \to X_j$ is shorter than $X_i \to X_j$ as measured by the sum of power-weighted edge lengths.  Note that the volume of $\Theta(X_i,X_j)$ is a function of the distance $\abs{X_i-X_j}$ and that a portion of $\Theta(X_i,X_j)$ intersects $B(z; R)$.  Therefore there exists a constant $\theta_1^\prime = \theta_1^\prime(d,p) > 0$ such that the intersection volume is at least $\theta_1^\prime \abs{X_i-X_j}^d$ for all sufficiently large $n$.

Suppose that event $H_n(1,2)$ occurs.  Then the shortest path from $X_1$ to $X_2$ contains no sample point other than $X_1$ and $X_2$, and the intersection of $\Theta(X_1,X_2)$ and $B(z; R)$ cannot contain any of $X_3,X_4,\dots,X_n$.  Since it is assumed that $\abs{X_1-X_2} > b^\alpha (n f_m)^{(\alpha-1)/d}$,
\begin{equation*}
    \Pr\paren[1]{H_n(1,2)}
    \leq \paren[1]{1 - \theta_1^\prime (n f_m b^d)^\alpha / n }^{n-2}.
\end{equation*}
There are $n (n-1) / 2 \leq n^2$ pairs of sample points, hence
\begin{equation*}
    1 - \Pr(F_n)
    = \Pr\paren[1]{\bigcup\nolimits_{i < j} H_n(i,j)}
    \leq n^2 \paren[1]{1 - \theta_1 n^\alpha / n}^{n-2}
\end{equation*}
where $\theta_1 = \theta_1^\prime (f_m b^d)^\alpha$.
\end{proof}

Next we provide results on the number of nodes in the shortest paths (\autoref{thm:Ln-size}), and the mean convergence of $\BbbE L_n$ (\autoref{thm:Ln-mean}).
We will establish these results using the theory of Poisson processes in \autoref{sec:size-mean-proof}.
\autoref{thm:Ln-mean} involves the constant $C(d,p)$ in \autoref{thm:main}.
The definition of this constant will be given in \eqref{eq:cdp-definition} in the proof of \autoref{thm:poisson-mean}.

\begin{lemma}\label{thm:Ln-size}
Let $z \in \reals^d$, $R_2 > R_1 > 0$.
Assume that the pdf $f$ is uniform in $B(z; R_2)$, i.e., $f(u) = f(z) > 0$ for all $u \in B(z; R_2)$, but may have probability mass outside $B(z; R_2)$.
Then there exists a constant $C_* > 0$, depending only on $d$ and $p$, satisfying the following.

For $x,y \in B(z; R_1)$ with $x \neq y$, let $\# L(x,y; \cX_n \cap B(z; R_2))$ denote the number of nodes in the shortest path, and let $G_n(x,y)$ denote the event that
\begin{equation*}
  \frac{ \# L(x,y; \cX_n \cap B(z; R_2)) }{(n f(z))^{1/d} \abs{x-y}} \leq C_*.
\end{equation*}
For any given $b \in (0, 2R_1)$, there exists $\theta_2 > 0$ and $n_0 > 0$ such that if $\abs{x-y} \geq b$ and $n \geq n_0$ then
\begin{equation*}
  1 - \Pr(G_n(x,y))
  \leq \exp\paren[1]{-\theta_2 n^{1/(d+2p-1)}}.
\end{equation*}
\end{lemma}

\begin{proposition}\label{thm:Ln-mean}
Let $z \in \reals^d$, $R_2 > R_1 > 0$.
Assume that the pdf $f$ is uniform in $B(z; R_2)$ but may have probability mass outside $B(z; R_2)$.
Fix $\vareps > 0$ and $b \in (0, 2R_1)$.
Then there exists $n_0 > 0$ such that for all $n \geq n_0$ and $x,y \in B(z; R_1)$ with $\abs{x-y} \geq b$,
\begin{equation*}
  \abs*{\frac{\BbbE L(x,y; \cX_n \cap B(z; R_2))}{(n f(z))^{(1-p)/d} \abs{x-y}} - C(d,p)} < \vareps.
\end{equation*}
\end{proposition}

From \autoref{thm:euc-shortlink}, \autoref{thm:Ln-size}, and \autoref{thm:Ln-mean}, we obtain the following local convergence result.

\begin{proposition}\label{thm:euc-unif}
Let $z \in \reals^d$, $R_2 > R_1 > 0$.
Assume that the pdf $f$ is uniform in $B(z; R_2)$ but may have probability mass outside.
%Let $x_n,y_n$ be sequences in $B(z; R_1)$ satisfying $\liminf_n n \abs{x_n-y_n}^d / \log n = +\infty$.
Fix $\vareps > 0$ and $b \in (0, 2R_1)$.
Then there exists a constant $\theta_3 > 0$ such that for all sufficiently large $n$ and for all $x,y \in B(z; R_1)$ with $\abs{x-y} \geq b$,
\begin{equation*}
  \Pr \paren*{\abs*{\frac{L(x,y; \cX_n \cap B(z; R_2))}{(n f(z))^{(1-p)/d} \abs{x-y}} - C(d,p)} > \vareps}
  \leq \exp\paren[1]{-\theta_3 n^\alpha}.
\end{equation*}
\end{proposition}

While it is possible to obtain a weakened form of \autoref{thm:euc-unif} from \textcite{Howard:2001a}, we provide an alternative proof with improved convergence rate.

In the proof of \autoref{thm:euc-unif}, Talagrand's convex distance \autocite[See][Section~4.1]{Talagrand:1995a} is used in the following form.
Let $\omega$ be an elementary outcome in the sample space, and let $A$ be a measurable event with respect to $n$ sample points $X_1,\dots,X_n$.
Define the convex distance $d_c(\omega; A)$ of $\omega$ from $A$,
\begin{equation}\label{eq:convex-distance-def}
    d_c(\omega; A) = \adjustlimits\sup_{s_1,\dots,s_n} \min_{\eta \in A} \sum_i s_i 1_{\braces{X_i(\omega) \neq X_i(\eta)}}
\end{equation}
where the supremum is taken over $s_1,\dots,s_n \in \reals$, $\sum_i \abs{s_i}^2 \leq 1$.
For $t > 0$ define $A_t$ as the \emph{enlargement} of $A$ by $t$,
\begin{equation}\label{eq:talagrand-enlargement}
    A_t = \braces{\omega \colon d_c(\omega; A) \leq t}.
\end{equation}
This notation will be used only in the proof of \autoref{thm:euc-unif}.
Talagrand's concentration inequality \autocite[Theorem~4.1.1]{Talagrand:1995a} is
\begin{equation}\label{eq:talagrand-ineq}
    \Pr(A) (1 - \Pr(A_t)) \leq \exp\paren*{-\frac14 t^2}.
\end{equation}

\begin{proof}[Proof of \autoref{thm:euc-unif}]
Our proof is structured similarly to that of \textcite[Theorem~1.3]{Yukich:2000a} and \textcite[Section~7.1]{Talagrand:1995a}.
For convenience define $\tau_n = b (n f(z))^{1/d}$ and $\zeta_n = \tau_n^\alpha (nf(z))^{-1/d}$.
Let
\begin{itemize}
\item $F_n$ be the event that all the edges of the shortest path have distances at most $\zeta_n$ (See \autoref{thm:euc-shortlink} for $F_n$),
\item $G_n$ be the event that $\# L_n(x_n,y_n) \leq C_* \tau_n$ where the constant $C_*$ is specified in \autoref{thm:Ln-size},
\item $H_n$ be the event that at every point $u \in B(z; R_2)$, at least one of the sample points is in $B(u; \zeta_n)$.
\end{itemize}

All these events occur with high probability.
Both $1 - \Pr (F_n)$ and $1 - \Pr (G_n)$ are exponentially small in $n^\alpha$ by \autoref{thm:euc-shortlink} and \autoref{thm:Ln-size}, respectively.
The probability $1 - \Pr (H_n)$ may be shown to be exponentially small as well by an argument similar to the proof to \autoref{thm:euc-shortlink}, which we will outline here.
Let $\braces{ B(w_i; 2^{-1} \zeta_n, 1 \leq i \leq m }$ be an open cover of $B(z; R_2)$ with $m = O(n)$.
The probability that at least one of the open balls comprising the cover does not contain any sample point is bounded above by $m (1 - 4^{-d} f(z) V_d \zeta_n^d)^n$, where $V_d$ denotes the volume of a unit ball.
This upper bound is exponentially small in $n f(z) \zeta_n^d = (n f(z) b^d)^\alpha$, hence exponentially small in $n^\alpha$ as $n$ goes to infinity.

\newcommand\Ln{\ensuremath{L_n^B}}
We use shorthand notation \Ln\ for $L(x,y; \cX_n \cap B(z; R_2))$.  For $a > 0$, define $W_n(a)$ to be the event that $\Ln \geq a$.  Let $\omega \in F_n \cap G_n$ and $\eta \in H_n \cap W_n(a)$ be two elementary outcomes in the sample space.  If the shortest path $L_n(\omega)$ from $x$ to $y$ through the realization $\cX_n(\omega) = \braces{ X_1(\omega),\dots,X_n(\omega) }$ is the sequence
\begin{equation*}
  x_n = \pi_0(\omega) \to \pi_1(\omega) \to \dots \to \pi_{k+1}(\omega) = y_n,
\end{equation*}
where $k = \# \Ln(\omega)$, then we may build a path $\pi(\eta)$ from $x$ to $y$ through another realization $X_1(\eta)$,\dots,$X_n(\eta)$ as follows.  For each $i \in \braces{ 1,\dots,k }$, let $j$ denote the index where $X_j(\omega) = \pi_i(\omega)$.  If $X_j(\omega) = X_j(\eta)$, then set $\pi_i(\eta) = \pi_i(\omega)$.  Otherwise, since $\eta \in H_n \cap W_n(a) \subset H_n$, there exists some $l$ such that $X_l(\eta)$ is in $B(z; R_2)$ and $\abs{X_l(\eta) - \pi_i(\omega)} < \zeta_n$.
Set $\pi_i(\eta) = X_l(\eta)$.
Then it follows that $\abs{\pi_i(\eta) - \pi_i(\omega)} \leq \zeta_n$ for all $i = 1,\dots,k$.
At the same time, $\abs{\pi_i(\omega) - \pi_{i+1}(\omega)} \leq \zeta_n$ for all $i = 1,\dots,k$ since $\omega \in F_n$.
It follows from the triangle inequality that
\begin{equation*}
%\label{eq:omega-eta-diff}
\begin{split}
  \MoveEqLeft\abs{\pi_i(\eta) - \pi_{i+1}(\eta)}
  \\ &\leq \abs{\pi_i(\eta) - \pi_i(\omega)} + \abs{\pi_i(\omega) - \pi_{i+1}(\omega)} + \abs{\pi_{i+1}(\omega) - \pi_{i+1}(\eta)}
  \\ &\leq 3 \zeta_n.
\end{split}
\end{equation*}
Let $I$ be the set of indices $i$ where $\pi_i(\omega) \neq \pi_i(\eta)$.
Then the power-weighted length of the path $\pi(\eta)$ is at most
\begin{equation*}
\begin{split}
  \sum_{i=0}^{k} \abs{\pi_i(\eta) - \pi_{i+1}(\eta)}^p
  &\leq \sum_{i=0}^{k} \abs{\pi_i(\omega) - \pi_{i+1}(\omega)}^p + \sum_{\mathclap{\substack{i \in I\\\text{ or } i+1 \in I}}} \abs{\pi_i(\eta) - \pi_{i+1}(\eta)}^p
  \\ &\leq \Ln(\omega) + 2 \abs{I} (3 \zeta_n)^p.
\end{split}
\end{equation*}
On the other hand, $\eta \in W_n(a)$, i.e., $\Ln(\eta) \geq a$.
Hence
\begin{equation*}
  a \leq \Ln(\omega) + 2 \abs{I} (3 \zeta_n)^p.
\end{equation*}

Let $d_c(\omega; H_n \cap W_n(a))$ be the convex distance as defined in \eqref{eq:convex-distance-def}.
By choosing $s_i$ in \eqref{eq:convex-distance-def} as
\begin{equation*}
  s_i =
  \begin{cases}
    1 / \sqrt{\#\Ln(\omega)}, & \text{if $X_i$ is in the path $\Ln(\omega)$},\\
    0 & \text{otherwise},
  \end{cases}
\end{equation*}
there exists $\eta \in H_n \cap W_n(a)$ such that $\abs{I} \leq d_c(\omega; H_n \cap W_n(a)) \sqrt{\# \Ln(\omega)}$.
Therefore
\begin{equation*}
    \Ln(\omega)
    \geq a - 2 \cdot (3 \zeta_n)^p \cdot d_c(\omega; H_n \cap W_n(a)) \sqrt{ \#\Ln(\omega) }.
\end{equation*}
In particular, if $\Ln(\omega) \leq a - u$ for $u > 0$, then
\begin{align}
\notag
    d_c(\omega; H_n \cap W_n(a))
    &\geq \frac{u}{2 \cdot (3 \zeta_n)^p \cdot \sqrt{ \#\Ln(\omega) }} \\
\label{eq:path-length-bound}
    &\geq \frac{u}{2 \cdot (3 \zeta_n)^p \cdot \sqrt{C_* \tau_n} }
\end{align}
since $\omega \in G_n$.
% Note that for arbitrary events $A,B$, the inequality $\Pr(A) \leq \Pr(A \cap B) + (1 - \Pr(B))$ holds.  Therefore,
Let $t > 0$ equal to the right side of \eqref{eq:path-length-bound}, and let $(H_n \cap W_n(a))_t$ denote the \emph{enlargement} of $H_n \cap W_n(a)$ as defined in \eqref{eq:talagrand-enlargement}, i.e., the collection of all elementary outcomes whose convex distance from $H_n \cap W_n(a)$ is at most $t$.
Then \eqref{eq:path-length-bound} implies that
\begin{equation}\label{eq:talagrand-pre}
    \Pr \paren[1]{ \braces{ \Ln \leq a - u } \cap F_n \cap G_n }
    \leq 1 - \Pr\paren[1]{(H_n \cap W_n(a))_t}.
\end{equation}

Let $M_n$ be the median of \Ln.
Note that $\Pr(H_n \cap W_n(M_n))$ is arbitrarily close to $1/2$ for sufficiently large $n$ since $\Pr(W_n(M_n)) = \Pr \paren{\Ln \geq M_n} = 1/2$, and $\Pr (H_n)$ approaches one as $n\to\infty$ by \autoref{thm:euc-shortlink}.
In particular, for $n$ sufficiently large, $\Pr(H_n \cap W_n(M_n)) \geq 1/3$.
Set $a = M_n$ in \eqref{eq:talagrand-pre}, and apply Talagrand's inequality \eqref{eq:talagrand-ineq} with $A = H_n \cap W_n(M_n)$ to obtain
\begin{equation*}
    \Pr \{ \Ln \leq M_n - u \}
    \leq 3 \exp\paren*{ -\frac{C_2 u^2}{\zeta_n^{2p} \tau_n}}
         + \paren[1]{1 - \Pr(F_n)} + \paren[1]{1 - \Pr(G_n)},
\end{equation*}
for sufficiently large $n$, where $C_2 = (2^4 3^{2p} C_*)^{-1}$.

To obtain an upper bound, set $a = M_n + u$.  From \eqref{eq:talagrand-pre}, $1 - \Pr\paren[1]{(H_n \cap W_n(a))_t} \geq 1/3$ for sufficiently large $n$ since both $\Pr (F_n)$ and $\Pr (G_n)$ converge to one as $n\to\infty$.  Apply Talagrand's inequality again for $A = H_n \cap W_n(a) = H_n \cap \braces{\Ln \geq M_n + u}$.
This gives
\begin{equation*}
    \Pr \{ \Ln \geq M_n + u \}
    \leq 3 \exp\paren*{ -\frac{C_2 u^2}{\zeta_n^{2p} \tau_n} } + \paren[1]{1 - \Pr(H_n)}
\end{equation*}
for sufficiently large $n$.
Combine the above two inequalities above and $\abs{x-y} \geq b$ to obtain
\begin{equation}\label{eq:Ln-bound}
  \begin{aligned}
    \Pr \paren*{ \frac{\abs{\Ln - M_n}}{ (n f(z))^{(1-p)/d} \abs{x-y} } > u }
      & \leq 6 \exp\paren[2]{ -C_2 (n f(z) b^d)^\alpha u^2 } + h_n
      \\ & = 6 \exp\paren[2]{ -C_3 n^\alpha u^2 } + h_n,
  \end{aligned}
\end{equation}
where $h_n = \paren[1]{1 - \Pr (F_n)} + \paren[1]{1 - \Pr (G_n)} + \paren[1]{1 - \Pr (H_n)}$, and $C_3 = C_2 (f(z) b^d)^\alpha$.
The reader can verify the inequality by recalling the definitions $\tau_n = b (nf(z))^{1/d}$, $\zeta_n = \tau_n^\alpha (nf(z))^{-1/d}$, and $\alpha = 1/(d+2p)$.

Note that $\Pr \paren{\abs{\Ln - M_n} > u} = 0$ when $u \geq \abs{x-y}^p$.
Integrate the right side of \eqref{eq:Ln-bound} for $u \geq 0$ to obtain the upper bound
\begin{align*}
    \frac{\abs{\BbbE \Ln - M_n}}{ (n f(z))^{(1-p)/d} \abs{x-y} }
    &\leq 6 \sqrt{ \frac{\pi}{C_3 n^\alpha} }
        + \paren[2]{(n f(z))^{1/d} \abs{x-y}}^{p-1} h_n.
    %\\ &\leq 6 \sqrt{ \frac{\pi}{C_3 n^\alpha} }
    %    + \paren[2]{(n f(z))^{1/d} (2 R_1)}^{p-1} h_n.
\end{align*}
Since $\Pr(F_n)$, $\Pr(G_n)$, and $\Pr(H_n)$ approach one exponentially fast in $n^\alpha$, so does $h_n$.
Furthermore, the convergence rate is independent of the choice $x,y$.
Therefore
\begin{equation*}
    \lim_{n\to\infty} \frac{\abs{\BbbE \Ln - M_n}}{(n f(z))^{(1-p)/d} \abs{x-y}} = 0.
\end{equation*}
By \autoref{thm:Ln-mean}, for all sufficiently large $n$ and $\abs{x-y} \geq b$,
\begin{equation*}
    \Pr \paren[3]{ \abs[3]{ \frac{\Ln}{(n f(z))^{(1-p)/d} \abs{x-y}} - C(d,p) } > \vareps }
    \leq \Pr \paren[3]{ \frac{\abs{\Ln - M_n}}{ (n f(z))^{(1-p)/d} \abs{x-y}} > \frac{\vareps}{2} }.
\end{equation*}
Thus the proposition follows from \eqref{eq:Ln-bound}, \autoref{thm:euc-shortlink}, and \autoref{thm:Ln-size}.
%by choosing $\theta_3(d,p,\vareps) = \min\braces{ \theta_1(d,p), 4^{-1} C_2 \vareps^2 }$.
\end{proof}

\begin{theorem}\label{thm:euc-unif-local}
Let $z \in \reals^d$, $R > 0$, and $b \in (0, R/2)$.
Assume that the pdf $f$ is uniform in $B(z; R)$ but may have probability mass outside.
Choose $\vareps > 0$ sufficiently small so that $(C(d,p) + \vareps) / 2 < (C(d,p) - \vareps) 5 / 8$.

Denote by $E_n(\vareps)$ the event that for all $x \in B(z; R/4)$ and $u \notin B(z; R)$,
\begin{equation}
  \label{eq:uniform-exit}
  \frac{L(x,u; \cX_n)}{(n f(z))^{(1-p)/d}} > (C(d,p) - \vareps) \cdot \frac58 R.
\end{equation}
Denote by $E'_n(\vareps)$ the event that for all $x,y \in B(z; R/4)$ with $\abs{x-y} \geq b$,
\begin{gather}
  \label{eq:Ln-equal-to-restriction}
  L(x,y; \cX_n) = L(x,y; \cX_n \cap B(z; R)),
\shortintertext{and}
  \label{eq:euc-unif-local-e1}
  \abs*{\frac{L(x,y; \cX_n)}{(n f(z))^{(1-p)/d} \abs{x-y}} - C(d,p)} \leq \vareps.
\end{gather}
Then there exists $\theta_4 > 0$ such that
\begin{equation*}
  1 - \Pr(E_n(\vareps) \cap E'_n(\vareps))
  \leq \exp\paren[1]{-\theta_4 n^\alpha}
\end{equation*}
for all sufficiently large $n$.
\end{theorem}

%The condition $(n f(z))^b \abs{x-y}^d \geq 1$ is introduced to guarantee that $(n f(z) \inf \abs{x-y}^d)^\alpha$ has polynomial order and to prevent sub-polynomial, e.g., logarithmic, growth.

\autoref{thm:euc-unif-local} asserts that with high probability, for sufficiently large $n$ the shortest path between the points $x,y$ in the open ball $B(z; R/4)$ does not exit $B(z; R)$.
We first prove a lemma that will be used to prove \autoref{thm:euc-unif-local}.

\begin{lemma}\label{thm:Ln-continuous}
Suppose the assumptions in \autoref{thm:euc-unif-local} hold.
Let $u,v,x,y \in B(z; R)$.
If
\begin{enumerate}[label=C.\arabic*,align=left]
  \item\label{it:local-fn}: the event $F_n$ from \autoref{thm:euc-shortlink} occurs,
  \item\label{it:local-c1}: $\abs{x-u}$ and $\abs{y-v}$ are at most $b^\alpha (n f(z))^{(\alpha-1)/d}$,
  \item\label{it:local-c2}: $\abs{x-y} \geq b$, and
  \item\label{it:local-c3}: $\displaystyle \frac{L(u,v; \cX_n \cap B(z; R))}{(n f(z))^{(1-p)/d} \abs{u-v}} \leq C(d,p) + \vareps$,
\end{enumerate}
then there exists $n_0 > 0$ independent of the choice $u,v,x,y$ in $B(z; R)$ such that
\begin{equation}
  \label{eq:euc-local-approx}
  \abs*{ \frac{L(x,y; \cX_n \cap B(z; R))}{(n f(z))^{(1-p)/d} \abs{x-y}} - \frac{L(u,v; \cX_n \cap B(z; R))}{(n f(z))^{(1-p)/d} \abs{u-v}} } < \frac{\vareps}{2}
\end{equation}
for all $n \geq n_0$.
\end{lemma}

\begin{proof}
We bound the left side of \eqref{eq:euc-local-approx} from above by
\begin{equation}\label{eq:euc-local-difference}
\begin{split}
  \MoveEqLeft \abs*{ \frac{L(x,y; \cX_n \cap B(z; R))}{(n f(z))^{(1-p)/d} \abs{x-y}} - \frac{L(u,v; \cX_n \cap B(z; R))}{(n f(z))^{(1-p)/d} \abs{x-y}} }
    \\ &+ \abs*{ \frac{L(u,v; \cX_n \cap B(z; R))}{(n f(z))^{(1-p)/d} \abs{x-y}} - \frac{L(u,v; \cX_n \cap B(z; R))}{(n f(z))^{(1-p)/d} \abs{u-v}} }.
\end{split}
\end{equation}
Since the event $F_n$ occurred by \autoref{it:local-fn}, every edge in the shortest path has length at most $b^\alpha (n f(z))^{(\alpha-1)/d}$.
Combining with \autoref{it:local-c1}, the difference between $L(x,y; \cX_n \cap B(z; R))$ and $L(u,v; \cX_n \cap B(z; R))$ is at most $2 \cdot (2b^\alpha)^p (n f(z))^{(\alpha-1)p/d}$.
With \autoref{it:local-c2}, the first term in \eqref{eq:euc-local-difference} may be bounded from above as follows:
\begin{equation*}
    \abs*{ \frac{L(x,y; \cX_n \cap B(z; R))}{(n f(z))^{(1-p)/d} \abs{x-y}} - \frac{L(u,v; \cX_n \cap B(z; R))}{(n f(z))^{(1-p)/d} \abs{x-y}} }
    \leq \frac{ 2^{p+1} b^{\alpha p} (n f(z))^{(\alpha-1)p/d} }{b (n f(z))^{(1-p)/d}}.
\end{equation*}
Note that $(\alpha-1)p/d - (1-p)/d = -(1 - \alpha p) / d < 0$.  Therefore, there exists $n_1$ such that the first term in \eqref{eq:euc-local-difference} is smaller than $\vareps/4$ for all $n \geq n_1$, and $n_1$ is independent of the choice $u,v,x,y$.

Since $\abs[1]{\abs{u-v} - \abs{x-y}} < 2 b^\alpha (nf(z))^{(\alpha-1)/d}$ by \autoref{it:local-c1}, the second term in \eqref{eq:euc-local-difference} can be bounded from above as follows using \autoref{it:local-c3}:
\begin{align*}
    \MoveEqLeft \abs*{ \frac{L(u,v; \cX_n \cap B(z; R))}{(n f(z))^{(1-p)/d} \abs{x-y}} - \frac{L(u,v; \cX_n \cap B(z; R))}{(n f(z))^{(1-p)/d} \abs{u-v}} }
    \\ &\leq \frac{L(u,v; \cX_n \cap B(z; R))}{(n f(z))^{(1-p)/d} \abs{u-v}} \abs*{ \frac{\abs{u-v} - \abs{x-y}}{\abs{x-y}} }
    \\ &\leq \paren[2]{C(d,p) + \vareps} \frac{2 b^\alpha (n f(z))^{(\alpha-1)/d}}{b}.
\end{align*}
Since $(\alpha-1)/d < 0$, there exists $n_2$ such that the second term in \eqref{eq:euc-local-difference} is smaller than $\vareps/4$ for all $n \geq n_2$, and again $n_2$ is independent of the choice $u,v,x,y$.
\autoref{thm:Ln-continuous} follows by choosing $n_0 = \max(n_1,n_2)$.
\end{proof}

\begin{proof}[Proof of \autoref{thm:euc-unif-local}]
Let $\zeta_n = b^\alpha (nf(z))^{(\alpha-1)/d}$.
For a set of points $\braces{w_i}_{i=1}^m$ in $\reals^d$, let
\begin{equation*}
  \braces*{ B(w_i; \zeta_n) \colon w_i \in B(z; R/4), 1 \leq i \leq m}
\end{equation*}
be a finite open cover of $B(z; R/4)$ with $m = O(n)$.
%$m \leq (\sqrt{d} R)^d nf(z)$.
Likewise, for a set of points $\braces{v_k}_{k=1}^\ell$ in $\reals^d$, let
\begin{equation*}
  \braces*{B(v_k; \zeta_n) \colon v_k \in B(z; 7R/8), 1 \leq k \leq \ell}
\end{equation*}
be a finite open cover of the boundary of $B(z; 7R/8)$ with $\ell = O(n)$.
%$\ell \leq (4 \sqrt{d} R)^d nf(z)$.
% Without loss in generality we can assume that the two covers have equal ball radii.

Suppose that
\begin{enumerate}[label=D.\arabic*,align=left]
  \item\label{it:local-d1}: the event $F_n$ from \autoref{thm:euc-shortlink} occurs, and
  \item\label{it:local-d2}: $\displaystyle \abs*{ \frac{L(w_i, v_k; \cX_n \cap B(z;R))}{(n f(z))^{(1-p)/d} \abs{w_i-v_k}} - C(d,p) } \leq \frac{\vareps}{2}$ for all $w_i,v_k$.
\end{enumerate}
We claim that \eqref{eq:uniform-exit} holds for sufficiently large $n$, under the assumptions \autoref{it:local-d1} and \autoref{it:local-d2}.

Let $L(x; r; \cX_n)$, $r > 0$, denote the minimal power-weighted path length from $x$ to the boundary of $B(z; r)$, i.e.,
\begin{equation*}
  L(x; r; \cX_n) = \min_{\abs{z-y} = r} L(x,y; \cX_n).
\end{equation*}
Every path from $x \in B(z; R/4)$ to $u \notin B(z; R)$ crosses the boundary of $B(z; 7R/8)$, therefore $L(x,u; \cX_n) \geq L(x; 7R/8; \cX_n)$.
It suffices to show that
\begin{equation}
  \label{eq:Lr-uniform-exit}
  \frac{L(x; 7R/8; \cX_n)}{(nf(z))^{(1-p)/d}} \geq (C(d,p) - \vareps) \frac58 R
\end{equation}
for all $x \in B(z; R/4)$ to prove \eqref{eq:uniform-exit}.

Note that $L(x; 7R/8; \cX_n) = L(x; 7R/8; \cX_n \cap B(z; R))$, i.e.,
\begin{equation*}
  L(x; 7R/8; \cX_n) = \min_{\abs{z-y} = 7R/8} L(x,y; \cX_n \cap B(z; R)).
\end{equation*}
If the shortest path to the boundary were to reach any point outside $B(z; R) \supset B(z; 7R/8)$, the path must have already passed through the boundary, which is a contradiction.

For every $x \in B(z; R/4)$, there exists $w_i$ such that $\abs{x - w_i} < \zeta_n$, and for every $q$ on the boundary of $B(z; 7R/8)$, there exists $v_k$ such that $\abs{q - v_k} < \zeta_n$.
Consequently, by \autoref{thm:Ln-continuous} and assumptions \autoref{it:local-d1} and \autoref{it:local-d2}, for sufficiently large $n$,
\begin{equation*}
  \frac{L(x,q; \cX_n \cap B(z;R))}{(n f(z))^{(1-p)/d} \abs{x-q}} \geq C(d,p) - \vareps,
\end{equation*}
for all $x \in B(z; R/4)$ and for all $q$ satisfying $\abs{z-q} = 7R/8$.
Use $\abs{x-q} \geq 5R/8$, and we have proved \eqref{eq:Lr-uniform-exit}, and in turn, \eqref{eq:uniform-exit}.

Now let $x,y \in B(z; R/4)$ and $\abs{x-y} \geq b$.
Then there exist $w_i,w_j$ such that $\abs{w_i - x} < \zeta_n$ and $\abs{w_j - y} < \zeta_n$.
Suppose the following condition holds in addition to \autoref{it:local-d1} and \autoref{it:local-d2};
\begin{enumerate}[resume,label=D.\arabic*,align=left]
  \item\label{it:local-d3}: $\displaystyle \abs*{\frac{L(w_i,w_j; \cX_n \cap B(z; R))}{(n f(z))^{(1-p)/d} \abs{w_i-w_j}} - C(d,p)} \leq \frac\vareps2$.
\end{enumerate}
We claim that \eqref{eq:Ln-equal-to-restriction} holds.

Assume to the contrary that the path $L(x,y; \cX_n)$ includes some point outside $B(z; R)$.
Then the path has crossed the boundary of $B(z; 7R/8)$, hence $L(x; 7R/8; \cX_n) \leq L(x,y; \cX_n)$.
We have already seen that \eqref{eq:Lr-uniform-exit} holds.
Therefore,
\begin{equation*}
  \paren[1]{C(d,p) - \vareps} \frac58 R
  \leq \frac{L(x; 7R/8; \cX_n)}{(n f(z))^{(1-p)/d}}
  \leq \frac{L(x, y; \cX_n)}{(n f(z))^{(1-p)/d}}.
\end{equation*}
On the other hand, apply \autoref{thm:Ln-continuous} with \autoref{it:local-d3} to have
\begin{equation*}
  \frac{L(x, y; \cX_n)}{(n f(z))^{(1-p)/d}}
  \leq \frac{L(x, y; \cX_n \cap B(z; R))}{(n f(z))^{(1-p)/d}}
  \leq \paren[1]{C(d,p) + \vareps} \frac12 R
\end{equation*}
since $x,y \in B(z; R/4)$ and $\abs{x-y} \leq R/2$.
Recall that $\vareps$ was assumed to be sufficiently small so that $(C(d,p) + \vareps) / 2 < (C(d,p) - \vareps) 5/8$.
Therefore we have a contradiction.

We now claim that \eqref{eq:euc-unif-local-e1} is true when assumptions \autoref{it:local-d1}, \autoref{it:local-d2}, and \autoref{it:local-d3} hold.
Start with
\begin{equation}\label{eq:euc-unif-local-3e}
\begin{split}
    \MoveEqLeft \abs*{ \frac{L(x,y; \cX_n)}{(n f(z))^{(1-p)/d} \abs{x-y}} - C(d,p) }
    \\ \leq {} &\abs*{ \frac{L(x,y; \cX_n)}{(n f(z))^{(1-p)/d} \abs{x-y}} - \frac{L(w_i,w_j; \cX_n)}{(n f(z))^{(1-p)/d} \abs{w_i-w_j}}}
    \\ & + \abs*{ \frac{L(w_i,w_j; \cX_n)}{(n f(z))^{(1-p)/d} \abs{w_i-w_j}} - C(d,p) }.
\end{split}
\end{equation}
From \eqref{eq:uniform-exit} and \eqref{eq:Ln-equal-to-restriction}, $L(x,y; \cX_n)$ and $L(w_i,w_j; \cX_n)$ in the upper bound of \eqref{eq:euc-unif-local-3e} can be replaced by $L(x,y; \cX_n \cap B(z; R))$ and $L(w_i,w_j; \cX_n \cap B(z; R))$, respectively.
Therefore, for sufficiently large $n$, the first term in the upper bound is less than $\vareps / 2$ by \autoref{thm:Ln-continuous}, and the second term is less than $\vareps / 2$ by \autoref{it:local-d3}.
This establishes that \eqref{eq:euc-unif-local-e1} holds.

%To prove \eqref{eq:euc-unif-local-e2}, we repeat a similar argument.
%Every path from $x \in B(z; R/4)$ to $u \notin B(z; R)$ must cross the boundary of $B(x; 5R/8)$, hence $L(x, 5R/8; \cX_n) \leq L_n(x,u)$.
%Combine this with \eqref{eq:uniform-exit}, we have \eqref{eq:euc-unif-local-e2}.

In summary, we have shown that \eqref{eq:uniform-exit}, \eqref{eq:Ln-equal-to-restriction}, and \eqref{eq:euc-unif-local-e1} hold when events \autoref{it:local-d1}, \autoref{it:local-d2}, and \autoref{it:local-d3} occur.
If the event $E_n(\vareps)$ or $E'_n(\vareps)$ does not occur, either one of \autoref{it:local-d1}, \autoref{it:local-d2}, or \autoref{it:local-d3} does not occur:
\begin{multline*}
    1 - \Pr\paren[1]{E_n(\vareps) \cap E'_n(\vareps)}
    \leq \paren[1]{1 - \Pr\paren[1]{F_n}}
    \\
    \begin{aligned}
      + \sum_{w_i,v_k} \Pr \paren[3]{ \abs[3]{ \frac{L(w_i,v_k; \cX_n \cap B(z;R))}{(n f(z))^{(1-p)/d} \abs{w_i-v_k}} - C(d,p) } > \frac{\vareps}{2} }&
      \\ + \sum_{w_i,w_j} \Pr \paren[3]{ \abs[3]{ \frac{L(w_i,w_j; \cX_n \cap B(z;R))}{(n f(z))^{(1-p)/d} \abs{w_i-w_j}} - C(d,p) } > \frac{\vareps}{2} }&.
    \end{aligned}
\end{multline*}
The first sum is over all pairs $w_i,v_k$.
The second sum is over all $w_i,w_j$ with $\abs{w_i-w_j} \geq b/2$.
\autoref{thm:euc-unif-local} now follows from \autoref{thm:euc-shortlink} and \autoref{thm:euc-unif} with $R_1 = 7R/8$ and $R_2 = R$, since the number of summands are of polynomial order in $n$.
%The constant $\theta_4(d,p,\vareps)$ may be chosen as $\min \braces{ \theta_1(d,p), \theta_3(d,p,\vareps/2) }$.
\end{proof}

\begin{corollary}\label{thm:euc-local}
Assume that $f$ is continuous at $z \in \reals^d$ and $f(z) > 0$.
Fix $\vareps > 0$ small enough so that $(C(d,p) + \vareps) / 2 < (C(d,p) - \vareps) 5/8$.
For $R > 0$ and $b \in (0,R/2)$, let $H_n = H_n(z,R,\vareps,b)$ denote the event that
\begin{equation}\label{eq:euc-local}
  \abs*{\frac{L(x,y; \cX_n)}{(n f(z))^{(1-p)/d} \abs{x-y}} - C(d,p)} \leq \vareps,
\end{equation}
for all $x,y \in B(z; R/4)$ with $\abs{x-y} \geq b$, and simultaneously
\begin{equation}\label{eq:euc-exit}
  \frac{L(x,u; \cX_n)}{(n f(z))^{(1-p)/d}}
  \geq (C(d,p) - \vareps) \frac58 R,
\end{equation}
for all $x \in B(z; R/4)$ and $u \notin B(z; R)$.
Then there exists $R = R(z) > 0$ such that for all $b \in (0,R/2)$ there exists $\theta_5 > 0$ for which,
\begin{equation}\label{eq:euc-local-tails}
  1 - \Pr(H_n(z,R,\vareps,b))
  \leq \exp\paren[1]{-\theta_5 n^\alpha}
\end{equation}
for all sufficiently large $n$.
\end{corollary}

\begin{proof}
Let $f_m = f_m(R)$ and $f_M = f_M(R)$ denote the infimum and the supremum of $f$ inside $B(z; R)$, respectively.
Since $f$ is continuous at $z$, we can choose sufficiently small $R = R(z) > 0$ such that $f_m$ and $f_M$ are sufficiently close to $f(z)$ to satisfy
\begin{gather}
  \label{eq:unif-local-r1}
  \paren[1]{C(d,p) + \vareps/2} \paren*{\frac{f(z)}{f_m}}^{(p-1)/d} \leq C(d,p) + \vareps,\text{ and}
  \\ \label{eq:unif-local-r2}
  \paren[1]{C(d,p) - \vareps/2} \paren*{\frac{f(z)}{f_M}}^{(p-1)/d} \geq C(d,p) - \vareps.
\end{gather}
In addition, shrink $R$ if necessary to ensure $f_M V_d R^d \leq 1$ where $V_d$ denotes the volume of a unit ball.
Now choose and fix an arbitrary $b \in (0,R/2)$ for the entire proof.

For each sample point $X_i \in \cX_n$,
let $Y_i$ be an arbitrary random point outside $B(z; R)$, and
let $Z_i$ be an independent Bernoulli random variable with $\Pr(Z_i = 1) = f_m / f(X_i)$.
Define a new random variable
\begin{equation*}
  X_i^m =
  \begin{cases}
    X_i & \text{if $X_i$ is not in $B(z; R)$},\\
    X_i Z_i + Y_i (1 - Z_i) & \text{if $X_i$ is in $B(z; R)$}.
  \end{cases}
\end{equation*}
Let $\cX_n^m = \braces{ X_1^m, \dots, X_n^m }$.
$\cX_n^m$ is an i.i.d.\ sample and its pdf restricted to $B(z; R)$ is uniform with intensity $f_m$.
Define
\begin{enumerate}[label=E.\arabic*,align=left]
  \item\label{it:local-e1}: both the events $E_n(\vareps/2)$ and $E'_n(\vareps/2)$ in \autoref{thm:euc-unif-local} occur for $\cX_n^m$.
\end{enumerate}
Assume that the event \autoref{it:local-e1} occurs.
Let $x,y \in B(z; R/4)$ with $\abs{x-y} \geq b$.
Since $(\cX_n^m \cap B(z; R)) \subset (\cX_n \cap B(z; R))$, we have
\begin{equation*}
  L(x,y; \cX_n)
  \leq L(x,y; \cX_n \cap B(z; R))
  \leq L(x,y; \cX_n^m \cap B(z; R))
  = L(x,y; \cX_n^m).
\end{equation*}
The last equality comes from \eqref{eq:Ln-equal-to-restriction} for $\cX_n^m$.
From \eqref{eq:euc-unif-local-e1} and \eqref{eq:unif-local-r1}, we have
\begin{equation}
\label{eq:euc-local-upper}
  \frac{L(x,y; \cX_n)}{(nf(z))^{(1-p)/d} \abs{x-y}}
  \leq \frac{L(x,y; \cX_n^m)}{(nf_m)^{(1-p)/d} \abs{x-y}} \cdot \paren*{\frac{f(z)}{f_m}}^{\frac{p-1}{d}}
  \leq C(d,p) + \vareps.
\end{equation}
This establishes the upper half of the inequality \eqref{eq:euc-local} under \autoref{it:local-e1}.

It remains to establish the lower half of the inequality \eqref{eq:euc-local}.
This is established in two steps.
First we show that \eqref{eq:euc-exit} holds assuming an event \autoref{it:local-e2} analogous to \autoref{it:local-e1}.
Then we show that \autoref{it:local-e1} and \autoref{it:local-e2} imply \eqref{eq:euc-local}.

For each point $X_i \in \cX_n$, define a new random variable $X_i^M$ as follows.
Let $\sigma = \int (f_M - f(u)) \,du \geq 0$ where the integral is taken inside $B(z; R)$.
By the assumption $f_M V_d R^d \leq 1$, we have $0 \leq \sigma \leq 1$.
Let $\tilde Y_i$ be a random point inside $B(z; R)$ with pdf $\sigma^{-1} (f_M - f(u))$ for $u \in B(z; R)$, and
let $\tilde Z_i$ be a Bernoulli random variable with $\Pr(\tilde Z_i = 1) = 1 - \sigma$.
Define
\begin{equation*}
  X_i^M =
  \begin{cases}
    X_i & \text{if $X_i$ is in $B(z; R)$},\\
    X_i \tilde Z_i + \tilde Y_i (1 - \tilde Z_i) & \text{if $X_i$ is not in $B(z; R)$}.
  \end{cases}
\end{equation*}
Let $\cX_n^M = \braces{ X_1^M, \dots, X_n^M }$.
$\cX_n^M$ is an i.i.d.\ sample and its pdf restricted to $B(z; R)$ is uniform with intensity $f_M$.
Define
\begin{enumerate}[resume,label=E.\arabic*,align=left]
  \item\label{it:local-e2}: both the events $E_n(\vareps/2)$ and $E'_n(\vareps/2)$ in \autoref{thm:euc-unif-local} occur for $\cX_n^M$.
\end{enumerate}

Assume that the event \autoref{it:local-e2} occurs.
Let $x \in B(z; R/4)$ and $v = \argmin_y L(x,y; \cX_n)$ over all $\abs{z-y} = R$.
Then $L(x,v; \cX_n) = L(x,v; \cX_n \cap B(z; R))$, otherwise the shortest path from $x$ to $v$ has passed through another point on the boundary of $B(z; R)$, and this contradicts the choice of $v$.
Since $(\cX_n \cap B(z; R)) \subset (\cX_n^M \cap B(z; R))$,
\begin{equation*}
  L(x,v; \cX_n)
  = L(x,v; \cX_n \cap B(z; R))
  \geq L(x,v; \cX_n^M \cap B(z; R))
  \geq L(x,v; \cX_n^M).
\end{equation*}
As $v \notin B(z; R)$, it follows from \eqref{eq:unif-local-r2} and \eqref{eq:uniform-exit} for $\cX_n^M$ that
\begin{equation*}
  \frac{L(x,v; \cX_n)}{(nf(z))^{(1-p)/d}}
  \geq \frac{L(x,v; \cX_n^M)}{(nf_M)^{(1-p)/d}} \cdot \paren*{\frac{f(z)}{f_M}}^{\frac{p-1}{d}}
  \geq (C(d,p) - \vareps) \frac58 R.
\end{equation*}
If $u \notin B(z; R)$, the path $L(x,u; \cX_n)$ crosses the boundary of $B(z; R)$ at some point $u^\prime$, hence $L(x,u; \cX_n) \geq L(x,u^\prime; \cX_n) \geq L(x,v; \cX_n)$ by the minimality of $v$.
This establishes \eqref{eq:euc-exit} under \autoref{it:local-e2}.

Now we show that the upper bound of \eqref{eq:euc-local} holds under the conditions \autoref{it:local-e1} and \autoref{it:local-e2}.
Let $x,y \in B(z; R/4)$.
Then $L(x,y; \cX_n) = L(x,y; \cX_n \cap B(z; R))$.
Otherwise, i.e., if $L(x,y; \cX_n)$ reaches some point $u \notin B(z; R)$, then $L(x,u; \cX_n) \leq L(x,y; \cX_n)$ but this contradicts \eqref{eq:euc-exit} and \eqref{eq:euc-local-upper} since $\abs{x-y} \leq R/2$ and $\vareps$ was assumed to satisfy $(C(d,p) + \vareps) / 2 < (C(d,p) - \vareps) 5/8$.
We can now repeat the same argument used to establish that the upper half of the inequality \eqref{eq:euc-local} follows from \autoref{it:local-e1} to show that
\begin{equation*}
  \frac{L(x,y; \cX_n)}{(nf(z))^{(1-p)/d} \abs{x-y}}
  \geq \frac{L(x,y; \cX_n^M)}{(nf_M)^{(1-p)/d} \abs{x-y}} \cdot \paren*{\frac{f(z)}{f_M}}^{\frac{p-1}{d}}
  \geq C(d,p) - \vareps
\end{equation*}
by \eqref{eq:Ln-equal-to-restriction} and \eqref{eq:euc-unif-local-e1} for $\cX_n^M$, and \eqref{eq:unif-local-r2}.

Applying \autoref{thm:euc-unif-local} once to $\cX_n^m$ and once to $\cX_n^M$ asserts that \autoref{it:local-e1} and \autoref{it:local-e2} occur with exponentially small probability, which establishes \autoref{thm:euc-local}.
\end{proof}

\subsection{Convergence in Riemannian manifolds}\label{sec:manifolds}
We adapt \autoref{thm:euc-local} to the case when the probability distribution is supported on a Riemannian manifold $M$ instead of on a Euclidean space.
For $z \in M$ and $R > 0$, $B(z; R)$ denotes the set $\braces{ u \in M \colon \dist_1(u,z) < R}$.
Recall that $\alpha = 1/(d+2p)$.

\begin{lemma}\label{thm:Ln-normal-chart}
Let $(M,g_1)$ be a Riemannian manifold equipped with metric tensor $g_1$.
Let $z \in M$ be a point, and let $\vareps > 0$ be a fixed constant.
For $R > 0$ and $b \in (0,2R)$ we denote by $E_n(B(z; R), \vareps, b)$ the event that
\begin{enumerate}[label=(\roman*)]
  \item \label{it:normal-chart-e1} if a shortest path passes through $B(z; R)$ then its edges in $B(z; R)$ have $\dist_1$-lengths at most $b^\alpha (n f(z))^{(\alpha-1)/d}$, and
  \item \label{it:normal-chart-e2} for every pair $x,y \in B(z; R)$ with $\dist_1(x,y) \geq b$,
\begin{equation}\label{eq:normal-chart-e2-eq}
    \abs[3]{ \frac{L(x,y; \cX_n)}{n^{(1-p)/d} \dist_p(x,y)} - C(d,p) } \leq \vareps.
\end{equation}
\end{enumerate}

Assume that $f(z) > 0$ and that $f$ is continuous at $z$.
Then there exists $R = R(z) > 0$ such that for all $b \in (0,2R)$ there exists $\theta_6 > 0$ for which,
\begin{equation}\label{eq:normal-chart-expdecay}
  1 - \Pr(E_n(B(z; R), \vareps, b))
  \leq \exp\paren[1]{-\theta_6 n^\alpha}
\end{equation}
for all sufficiently large $n$.
\end{lemma}

In \autoref{it:normal-chart-e1}, a shortest path edge between two sample points is contained in $B(z; R)$ when a $g_1$-geodesic between these points is contained in $B(z; R)$.

%Note that if $f$ were infinitely differentiable, instead of being continuous, then $\dist_p$ is also Riemannian.

\begin{proof}
We prove this lemma by showing that both the events \autoref{it:normal-chart-e1} and \autoref{it:normal-chart-e2} satisfy the probability tail bound as in \eqref{eq:normal-chart-expdecay}.
That the statement \eqref{eq:normal-chart-expdecay} holds for the event \autoref{it:normal-chart-e1} follows from similar arguments as used in \autoref{thm:euc-shortlink}.
We focus on the event \autoref{it:normal-chart-e2}.

Choose $\delta > 0$ small enough to ensure that
\begin{gather}
  \label{eq:normal-chart-delta}
  2 (1+\delta)^p \leq \frac52 (1-\delta)^p,
  \\
  \label{eq:normal-chart-delta-1}
  \paren[1]{(1+\delta) / (1-\delta)}^p \paren[1]{C(d,p) + \vareps/2} \leq C(d,p) + \vareps,\text{ and}
  \\
  \notag
  \label{eq:normal-chart-delta-2}
  \paren[1]{(1-\delta) / (1+\delta)}^p \paren[1]{C(d,p) - \vareps/2} \geq C(d,p) - \vareps.
\end{gather}
Define $U = B(z; 4R) = \braces{ u \in M \colon \dist_1(u,z) < 4R }$ for $R > 0$.  Since $f$ is continuous, we may choose $R > 0$ small enough so that there exists a normal chart map $\varphi \colon U \subset M \to V \subset \reals^d$ such that the event $H_n = H_n(\varphi(z), 4R, 2^{-1} \vareps, (1+\delta)^{-1} b)$ from \autoref{thm:euc-local} satisfies \eqref{eq:euc-local-tails}, and that
\begin{gather}
\label{eq:Ln-normal-chart-eq1}
    (1 - \delta)^d \sup\nolimits_U f
    \leq f(z) = f(\varphi(z))
    \leq (1 + \delta)^d \inf\nolimits_U f,
\\
\label{eq:Ln-normal-chart-eq2}
    1 - \delta
    \leq \frac{\dist_1(u,v)}{\abs{\varphi(u)-\varphi(v)}}
    \leq 1 + \delta,
\end{gather}
for all $u,v \in U$ with $u \neq v$.
Recall that $f(z) = f(\varphi(z))$ follows from the basic properties of normal coordinates at $z \in M$.
See, e.g., \textcite[p.~73]{Oneill:1983a}.
%Here, the pdfs are Radon-Nikodym derivatives of the given probability measure in $U$ and $V$, respectively.
The denominator in \eqref{eq:Ln-normal-chart-eq2} is a Euclidean distance.

We claim that \autoref{it:normal-chart-e2} is true when the event $H_n(\varphi(z), 4R, 2^{-1} \vareps, (1+\delta)^{-1} b)$ occurs.
Then \eqref{eq:normal-chart-expdecay} would follow from \autoref{thm:euc-local}.
%and the constant $\theta_6(d,p,\vareps)$ may be chosen as $\min \braces{ \theta_1(d,p), \theta_5(d,p,2^{-1}\vareps) }$.
In the remainder of this proof, we prove this claim.

Let $x,y \in B(z; R) \subset U$.
Then a $g_1$-geodesic curve from $x$ to $y$ is contained in $U$ by the triangle inequality.
It follows from the definition of $\dist_p$ in \eqref{eq:distp_def} that
\begin{equation}\label{eq:Ln-normal-chart-eq3}
    \dist_p(x,y) \leq \dist_1(x,y) (\inf\nolimits_U f)^{(1-p)/d}.
\end{equation}
Furthermore, if a $g_p$-geodesic curve from $x$ to $y$ were contained in $U$, then
\begin{equation}\label{eq:Ln-normal-chart-eq4}
    \dist_p(x,y) \geq \dist_1(x,y) (\sup\nolimits_U f)^{(1-p)/d}.
\end{equation}
If a $g_p$-geodesic curve from $x$ to $y$ exits $U$, then $\dist_p(x,y)$ must be at least $(3 R) (\sup\nolimits_R f)^{(1-p)/d}$ by the assumptions $\dist_1(x,z) < R$ and $U = B(z; 4R)$.
On the other hand, it follows from \eqref{eq:normal-chart-delta}, \eqref{eq:Ln-normal-chart-eq1}, and \eqref{eq:Ln-normal-chart-eq3} that
\begin{align*}
    \dist_p(x,y)
    & \leq \dist_1(x,y) (\inf\nolimits_U f)^{(1-p)/d}
    \\ & \leq (2 R) (\sup\nolimits_U f)^{(1-p)/d} \paren*{\frac{1+\delta}{1-\delta}}^{p-1}
    \\ & < \frac52 R (\sup\nolimits_U f)^{(1-p)/d},
\end{align*}
and this is a contradiction.
Therefore a $g_p$-geodesic curve from $x$ to $y$ does not exit $U$, hence \eqref{eq:Ln-normal-chart-eq4} holds.

Next we show that $L(x,y; \cX_n) = L(x,y; \cX_n \cap U)$, i.e., the shortest path between $x,y \in B(z; R)$ is contained in $U$, when $H_n(\varphi(z), 4R, 2^{-1} \vareps, (1+\delta)^{-1} b)$ occurs.
Assume to the contrary that the path $L(x,y; \cX_n)$ from $x$ exits $U$.
Then the corresponding path in $V$ starts from $\varphi(x)$ and exits $V$, and its power-weighted length is at least $(C(d,p) - \vareps) 5R / 2$.
Note that in this proof \autoref{thm:euc-local} was applied with $R$ replaced by $4R$.
By \eqref{eq:Ln-normal-chart-eq2} and \eqref{eq:euc-exit}, it implies that $L(x,y; \cX_n) / (nf(z))^{(1-p)/d}$ is at least $(C(d,p) - \vareps) (1-\delta)^p 5R / 2$.
On the other hand, by \eqref{eq:Ln-normal-chart-eq2} and \eqref{eq:euc-local}, $L(x,y; \cX_n \cap U) / (nf(z))^{(1-p)/d}$ is at most $(C(d,p) + \vareps) (1+\delta)^p 2R$.
This is a contradiction by \eqref{eq:normal-chart-delta}, so we conclude $L(x,y; \cX_n) = L(x,y; \cX_n \cap U)$.

Let $L(\varphi(x),\varphi(y); \varphi(\cX_n \cap U))$ denote the shortest path length between $\varphi(x),\varphi(y) \in V$ in Euclidean space $\reals^d$.
Then
\begin{align*}
  \frac{L(x,y; \cX_n)}{n^{(1-p)/d} \dist_p(x,y)}
  &\leq \frac{L(x,y; \cX_n)}{(n \sup\nolimits_U f)^{(1-p)/d} \dist_1(x,y)}
  \\
  &\leq \paren*{\frac{1+\delta}{1-\delta}}^p \frac{L(\varphi(x), \varphi(y); \varphi(\cX_n \cap U))}{(n f(\varphi(z)))^{(1-p)/d} \abs{\varphi(x)-\varphi(y)}}
  \\
  &\leq C(d,p) + \vareps.
\end{align*}
The first inequality follows from \eqref{eq:Ln-normal-chart-eq4}, the second one follows from \eqref{eq:Ln-normal-chart-eq1} and \eqref{eq:Ln-normal-chart-eq2}, and the third one follows from \eqref{eq:normal-chart-delta-1} and the assumption that $H_n(\varphi(z),4R,2^{-1}\vareps,(1+\delta)^{-1}b)$ occurred.
Repeat the same argument for the lower bound to obtain
\begin{equation*}
  \frac{L(x,y; \cX_n)}{n^{(1-p)/d} \dist_p(x,y)}
  \geq C(d,p) - \vareps.
\end{equation*}
The last two inequalities imply \eqref{eq:normal-chart-e2-eq}.
\end{proof}

Our main result \autoref{thm:main} can now be obtained by applying \autoref{thm:Ln-normal-chart} to a finite open cover of the compact manifold $M$.

\begin{figure}
\centering
\begin{tikzpicture}
    \draw (0,0) .. controls (2,2) and (4,-2) .. (6,0)
        node[pos=0]  (z1) {}
        node[pos=.3] (z2) {}
        node[pos=.7] (z3) {}
        node[pos=1]  (z4) {};

    \path (z1) -- +(.2,.3)  node (u1) {};
    \path (z2) -- +(0,-.3)  node (u2) {};
    \path (z3) -- +(-.1,-.2) node (u3) {};

    \fill (z1) circle (2pt) node[below] {$z_1$};
    \draw (u1) circle (0.7) circle (2.1);
    \path (u1) -- +(120:0.7) node [above] {$U_1$};
    \path (u1) -- +(150:2.1) node [above left] {$V_1$};

    \fill (z2) circle (2pt) node[below left] {$z_2$};
    \draw (u2) circle (1.0) circle (3.0);
    \path (u2) -- +(-120:0.9) node [below left] {$U_2$};
    \path (u2) -- +(60:3.0) node [above right] {$V_2$};

    \fill (z3) circle (2pt) node[above] {$z_3$};
    \draw (u3) circle (0.8) circle (2.4);
    \path (u3) -- +(-30:0.8) node [right] {$U_3$};
    \path (u3) -- +(-30:2.4) node [below right] {$V_3$};

    \fill (z4) circle (2pt) node[above] {$z_4$};
\end{tikzpicture}
\caption{Path division procedure described in the proof of \autoref{thm:main}.  Here $k = 4$.  Note that $z_i \in U_i$ and $z_{i+1} \in V_i$ for $i = 1,2,3$.  Shortest path is depicted as a smooth curve for illustration purpose only and it is actually piecewise smooth.}
\label{fig:path-div}
\end{figure}
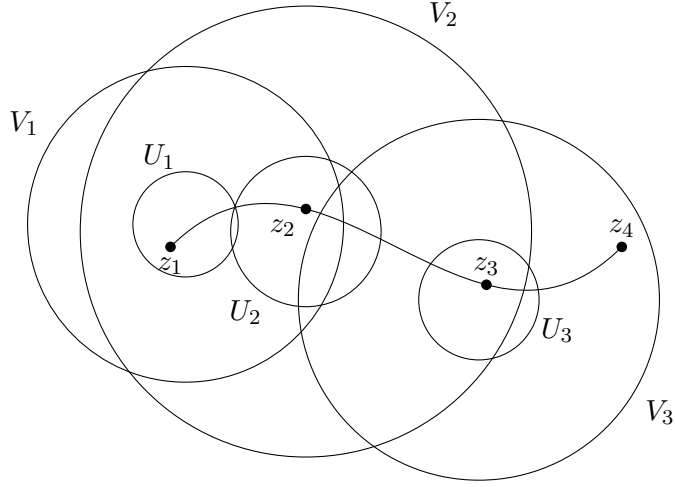

\begin{proof}[Proof of \autoref{thm:main}]
The crux of the proof is that the shortest path length has near sub-\ and super-additivity with high probability.  We will show that if \autoref{thm:Ln-normal-chart} holds locally at every point of $M$, then the local convergences may be assembled together to yield global convergence of the curve length.

For convenience, define $L_n(x,y) = L(x,y; \cX_n)$ in this proof.

For each $w \in M$, we may associate positive $R(w) > 0$ such that \autoref{thm:Ln-normal-chart} holds within open ball $V(w) = B(w; 3R(w))$ with error $2^{-1}\vareps$, i.e., the event $E_n(V(w), 2^{-1}\vareps, b^\prime)$, defined in \autoref{thm:Ln-normal-chart}, satisfies \eqref{eq:normal-chart-expdecay} with any $b^\prime \in (0, 6 R(w))$.
Let $U(w) = B(w; R(w))$.
By compactness, there exists a finite $m > 0$, $\braces{w_i \in M}_{i=1}^{m}$ such that the collection $\braces{U(w_i)}_{i=1}^{m}$ is a finite open cover of $M$.
Define $R_i = R(w_i)$, $U_i = U(w_i)$, and $V_i = V(w_i)$ for $i = 1,\dots,m$.

% For each $w_i \in M$, we may associate positive $R_i > 0$ such that \autoref{thm:Ln-normal-chart} holds within open ball $V_i = B(w_i; 3R_i) = \braces{ v \in M \colon \dist_1(v,w_i) < 3 R_i }$ with error $2^{-1}\vareps$, i.e., the event $E_n(V_i, 2^{-1}\vareps, b)$, defined in \autoref{thm:Ln-normal-chart}, satisfies \eqref{eq:normal-chart-expdecay} with any $b \in (0, 6R_i)$.
% Let $U_i = B(w_i; R_i)$.
% By compactness, there exists finite $m > 0$, $\braces{ w_i \in M }_{i=1}^{m}$, and corresponding $\braces{ R_i > 0 }_{i=1}^{m}$ such that corresponding $\braces{ U_i }_{i=1}^{m}$ is a finite open cover of $M$.

Reorder the indices if necessary so that $x \in U_1$.
Define $z_1 = x$.
If $L_n(x,y)$ ever exits $V_1$, then a point $z_2 \in V_1$ on the shortest path may be chosen such that $z_2 \notin U_1$ and $\dist_1(z_1,z_2) \geq R_1$.
Note that shortest paths are piece-wise $g_1$-geodesics, and $z_2 \in M$ need not be in $\cX_n$.
Reorder the indices of the open cover again if necessary so that $z_2$ is in $U_2$.  Repeat the procedure until $L_n(x,y)$ ends at $y$ in an open ball, say, $V_k$.  Set $z_{k+1} = y$.  Then points $x = z_1,z_2,\dots,z_k,z_{k+1} = y$ satisfy the conditions $z_i, z_{i+1} \in V_i$ for $i = 1,2,\dots,k$, and $\dist_1(z_i,z_{i+1}) \geq R_i \geq R$ for $i = 1,2,\dots,k-1$, where $R = \min_i R_i$.  The last edge length $\dist_1(z_k,z_{k+1})$ may be less than $R$.  However, note that $z_{k-1} \in U_{k-1}$ and $y = z_{k+1} \notin V_{k-1}$ by definition, hence $\dist_1(z_{k-1},z_{k+1}) > 2 R_{k-1} \geq 2 R$.  Therefore, $z_k$ may be adjusted so that $\dist_1(z_k,z_{k+1}) \geq R$ as well and $z_k \in V_k$.  See \autoref{fig:path-div} for illustration.

Suppose that
\begin{equation}\label{eq:Ln-manifold-cc-eq1}
    (C(d,p) - \vareps) \dist_p(z_i,z_{i+1}) \leq n^{(p-1)/d} L_n(z_i,z_{i+1}),
\end{equation}
holds for all $i=1,2,\dots,k$.
Then by the triangle inequality and the property $\nu^p + \omega^p \leq (\nu+\omega)^p$ for $\nu,\omega \geq 0$ and $p \geq 1$,
\begin{align*}
    (C(d,p) - \vareps) \dist_p(x,y)
    &\leq (C(d,p) - \vareps) \sum_{i=1}^{k} \dist_p(z_i,z_{i+1})
    \\ &\leq \sum_{i=1}^{k} n^{(p-1)/d} L_n(z_i,z_{i+1})
    \\ &\leq n^{(p-1)/d} L_n(x,y).
\end{align*}
Since $m$ is finite and the event $E_n(V_i, 2^{-1}\vareps, \min\braces{R,b})$ from \autoref{thm:Ln-normal-chart} satisfies \eqref{eq:normal-chart-expdecay}, for $i = 1,\dots,m$, we have
\begin{equation*}
  \Pr \paren[3]{\inf_{x,y} \frac{L_n(x,y)}{n^{(1-p)/d} \dist_p(x,y)} < C(d,p) - \vareps}
  \leq m \exp\paren[1]{-\theta_6 n^\alpha}
\end{equation*}
for all sufficiently large $n$.
%by choosing $\theta_0(d,p,\vareps) = \theta_6(d,p,2^{-1}\vareps)$ from \autoref{thm:Ln-normal-chart}.

For the upper tail, we follow a similar strategy to \textcite{Bernstein:2000a}.
Recall that $\alpha = 1/(d+2p)$.
If $z_1 = x$, $z_{k+1} = y$, and $z_i$ are points on a $g_p$-geodesic curve from $x$ to $y$, then $\dist_p(x,y) = \sum_{i=1}^{k} \dist_p(z_i,z_{i+1})$.  We showed above that the points may be chosen and indices of the open cover may be rearranged such that $z_i,z_{i+1} \in V_i$ and $\dist_1(z_i,z_{i+1}) \geq R$ for all $i=1,2,\dots,k$.
The shortest path from $z_{i-1}$ to $z_i$ and another shortest path from $z_i$ to $z_{i+1}$ may be pasted together to create a path from $z_{i-1}$ to $z_{i+1}$ by removing $z_i$ and connecting two nodes that were incident to $z_i$.
This pasting procedure can be repeated to create a path from $x = z_1$ to $y = z_{k+1}$.
Since \autoref{thm:Ln-normal-chart} applies in $V_1,\dots,V_m$, every edge length of the shortest path from $z_i$ to $z_{i+1}$ is at most $b^\alpha (n \inf f)^{(\alpha-1)/d}$ for $i=1,\dots,k$.
Therefore each pasting procedure may incur an additional cost of at most $(2b^\alpha)^p (n \inf f)^{(\alpha-1)p/d}$ so that
\begin{equation}
    L_n(x,y)
    \leq \sum_{i=1}^{k} L_n(z_i,z_{i+1}) + k (2b^\alpha)^p (n \inf f)^{(\alpha-1)p/d}.
\end{equation}
Therefore if event $E_n(V_i,2^{-1}\vareps,\min\braces{R,b})$ in \autoref{thm:Ln-normal-chart} holds for $V_1,V_2,\dots,V_m$, then
\begin{multline*}
  n^{(p-1)/d} L_n(x,y)
  \\
  \begin{aligned}
    &\leq \dist_p(x,y) \paren[2]{C(d,p) + \frac{\vareps}{2}} + k (2b^\alpha)^p n^{(p-1)/d} (n \inf f)^{(\alpha-1)p/d}
%    \\
%    &\leq \dist_p(x,y) \paren[2]{C(d,p) + \frac{\vareps}{2}} \paren*{1 + \frac{k 2^p}{C(d,p) + 2^{-1} \vareps} \frac{b^{\alpha p} (n \inf f)^{(\alpha p - 1)/d}}{\dist_1(x,y)} }
  \end{aligned}
\end{multline*}
since $\dist_p(x,y) \leq \dist_1(x,y) (\inf f)^{(1-p)/d}$.
For sufficiently large $n$, we have $n^{(p-1)/d} L_n(x,y) \leq (C(d,p) + \vareps) \dist_p(x,y)$ since $n^{\alpha p - 1}$
%$\dist_1(x,y) \geq b$ and $b^{\alpha p} (n \inf f)^{(\alpha p - 1)/d} / \dist_1(x,y)$
shrinks to zero as $n\to\infty$.
Therefore \autoref{thm:main} is established by applications of \autoref{thm:Ln-normal-chart} to $V_1,V_2,\dots,V_m$.
\end{proof}

We turn to the proof of \autoref{thm:main-complete}.  Note that when $M$ is complete, for every $x,y \in M$ there exists a geodesic path $\gamma$ between $x,y$ in $M$ by Hopf-Rinow theorem \autocite[Theorem~5.21, p.~138]{Oneill:1983a}.

\begin{proof}[Proof of \autoref{thm:main-complete}]
Define $L_n(x,y) = L(x,y; \cX_n)$.
Let $0 < \vareps < C(d,p)$.
Define
\begin{equation*}
    A = \braces{ u \in M \colon (C(d,p) - \vareps) \dist_p(x,u) \leq (C(d,p) + \vareps) \dist_p(x,y) }.
\end{equation*}
$A$ is compact by Hopf-Rinow theorem.

The proof of \autoref{thm:main-complete} is similar to the proof of \autoref{thm:main}, with $M$ replaced by $A$.
Let $V_1,\dots,V_m$ be an open cover of $A$ chosen as in the previous proof for compact $M$.

Suppose that the event $E_n(V_i,2^{-1}\vareps,\min\braces{R_1,\dots,R_m, b})$ from \autoref{thm:Ln-normal-chart} holds for $i = 1,\dots,m$.
By the construction of $A$, $g_p$-geodesics from $x$ to $y$ is contained in $A$.
Repeat the same argument used in the proof of \autoref{thm:main} to obtain
\begin{equation}\label{eq:thm-complete-upper}
  n^{(p-1)/d} L_n(x,y) \leq \paren*{C(d,p) + \vareps} \dist_p(x,y).
\end{equation}
Similarly, if the shortest path $L_n(x,y)$ does not exit $A$, then
\begin{equation}\label{eq:thm-complete-lower}
  n^{(p-1)/d} L_n(x,y) \geq (C(d,p) - \vareps) \dist_p(x,y).
\end{equation}

We claim that the shortest path $L_n(x,y)$ does not exit $A$, so that \eqref{eq:thm-complete-lower} is true.
Assume to the contrary that the path $L_n(x,y)$ exits $A$.
Let $u = \argmin_{u^\prime} L_n(x,u^\prime)$ where $u^\prime$ is over all boundary points of $A$.
%(If $A$ has no boundary in $M$, shortest paths stemming from $x$ cannot exit $A$.)
Since the path $L_n(x,y)$ exits $A$, we have $L_n(x,y) > L_n(x,u)$.
Since the path $L_n(x,u)$ is contained in $A$, \eqref{eq:thm-complete-lower} holds with $u$ in place of $y$.
Since $u$ is a point on the boundary of $A$, we have $(C(d,p) - \vareps) \dist_p(x,u) = (C(d,p) + \vareps) \dist_p(x,y)$.
Combine these with \eqref{eq:thm-complete-upper} to obtain
\begin{align*}
    \paren*{C(d,p) + \vareps} \dist_p(x,y)
    &\geq n^{(p-1)/d} L_n(x,y) > n^{(p-1)/d} L_n(x,u)
    \\ &\geq (C(d,p) - \vareps) \dist_p(x,u)
    \\ &= \paren*{C(d,p) + \vareps} \dist_p(x,y),
\end{align*}
and we have a contradiction.
We have shown that \eqref{eq:thm-complete-lower} holds.

Combine \autoref{thm:Ln-normal-chart} with \eqref{eq:thm-complete-upper} and \eqref{eq:thm-complete-lower} to obtain that
\begin{equation*}
  \Pr \paren*{ \abs*{ \frac{L_n(x,y)}{n^{(1-p)/d} \dist_p(x,y)} - C(d,p) } > \vareps }
\end{equation*}
has exponential decay in $n^\alpha = n^{1/(d+2p)}$.
Almost-sure convergence, and the limit stated in \autoref{thm:main-complete}, follow by the Borel-Cantelli lemma.
\end{proof}

%% The Appendices part is started with the command \appendix;
%% appendix sections are then done as normal sections
%\appendix

%% \section{}
%% \label{}

\section{Mean convergence and node cardinality}\label{sec:size-mean-proof}
In this section, we prove \autoref{thm:Ln-size} and \autoref{thm:Ln-mean}.  Since they were stated for sequences $\cX_n$ in a Euclidean space, we return to the Euclidean case $M = \reals^d$.  We introduce a few additional notations used in this section.

The proofs in this section use Poissonization arguments.  We denote by $\cH_\lambda$ a homogeneous Poisson point process in $\reals^d$ of constant intensity $\lambda > 0$.  Specifically, for any Borel set $B$ of Lebesgue measure $\nu(B)$ the cardinality $N_B$ of $\cH_\lambda \cap B$ is a Poisson random variable with mean $\lambda \nu(B)$ and, conditioned on $N_B$, the points $\cH_\lambda \cap B$ are i.i.d.\ uniform over $B$.
%If the Poisson process is not homogeneous, i.e., its intensity is not constant, we denote it by $\cP$ rather than $\cH$.  For example, $\cP_f$ denotes non-homogeneous Poisson process with intensity function $f$.  
We use a shorthand notation $\cL_\lambda(x,y) = L(x,y; \cH_\lambda)$.
%and $\cL_f(x,y) = L(x,y; \cP_f)$ for $x,y \in \reals^d$.

Let $e_1 = (1,0,\dots,0) \in \reals^d$ denote the unit vector.
By the translation and rotation invariance of $\cH_\lambda$, the distribution of $\cL_\lambda(x,y)$ for $x,y \in \reals^d$ is the same as the distribution of $\cL_\lambda(0,te_1)$ where $t = \abs{x-y}$.
This observation is used frequently in this section.

Let $T(u,v; b)$ for $u,v \in \reals^d$, $b > 0$, denote the set
\begin{equation}\label{eq:Tuvb-def}
    T(u,v; b) = \bigcup_{0 \leq s \leq 1} B(su + (1-s)v; b).
\end{equation}
Note that $\bigcup_{b > 0} T(u,v; b) = \reals^d$.
For convenience, define
\begin{equation}\label{eq:cL-def}
    \cL_\lambda(u,v; b) = L(u,v; \cH_\lambda \cap T(u,v; b)).
\end{equation}

\subsection{Percolation lemma}
The following lemma on percolation will be used in the proof of \autoref{thm:Ln-size}. 

\begin{lemma}\label{thm:poisson-exit-pr}
Let $\pi$ be a graph path in $\cH_\lambda$ starting at $0 \in \reals^d$.  Suppose that $\pi$ has power-weighted path length at most $c_0 \lambda^{(1-p)/d}$ and has at least $c_1 \lambda^{1/d}$ nodes for some $c_0,c_1 > 0$.  Then there exists a constant $\rho_0 > 0$, dependent on $d$ and $p$, such that if $c_1 > \rho_0 c_0$ then the probability that such path $\pi$ exists is exponentially small in $c_1 \lambda^{1/d}$.
\end{lemma}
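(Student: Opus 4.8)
The plan is to show that a path with many nodes but short power-weighted length forces the Poisson process to be anomalously dense along the path, an event that is exponentially unlikely. The key tension is between the two hypotheses: having at least $c_1 \lambda^{1/d}$ nodes while the total power-weighted length stays below $c_0 \lambda^{(1-p)/d}$. By the power-mean inequality (convexity of $t \mapsto t^p$ for $p > 1$), if the path has $N$ edges $e_1, \dots, e_N$ of Euclidean lengths $\ell_1, \dots, \ell_N$, then $\sum_i \ell_i^p \geq N^{1-p} \left( \sum_i \ell_i \right)^p$, so the Euclidean arclength $\sum_i \ell_i$ is at most $\left( N^{p-1} \sum_i \ell_i^p \right)^{1/p} \leq \left( c_1 \lambda^{1/d} \right)^{(p-1)/p} \cdot (c_0 \lambda^{(1-p)/d})^{1/p} \lambda^{\text{(exponent bookkeeping)}}$, which works out to a bound of the form $C c_0^{1/p} c_1^{(p-1)/p} \lambda^{-1/d + 1/d} = C c_0^{1/p} c_1^{(p-1)/p}$ times an $O(1)$ power of $\lambda$; the point is that the arclength is $O(\lambda^0)$ — a \emph{bounded} Euclidean length, up to the constants. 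So the $c_1 \lambda^{1/d}$ nodes of $\pi$ all lie within a tube of bounded Euclidean radius around a curve of bounded Euclidean length starting at the origin.

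First I would make the arclength bound precise and fix a bounded region: the nodes of $\pi$ all lie in a ball $B(0; r_0)$ where $r_0$ depends only on $d, p, c_0, c_1$ (using that the starting node is $0$ and the Euclidean arclength is bounded by $r_0$). Then the event ``there exists such a path'' is contained in the event that $\cH_\lambda \cap B(0; r_0)$ has at least $c_1 \lambda^{1/d}$ points. But $\#(\cH_\lambda \cap B(0; r_0))$ is Poisson with mean $\lambda \, \omega_d r_0^d$, which is of order $\lambda$, not order $\lambda^{1/d}$ — so for large $\lambda$ the bound $c_1 \lambda^{1/d}$ is \emph{far below} the mean, and this crude containment gives nothing. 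This is the main obstacle: the naive ``count points in a fixed ball'' argument fails because a bounded region of a dense Poisson process has plenty of points.

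The fix — and the real content — is to exploit the efficiency constraint more sharply, which is where the hypothesis $c_1 > \rho_0 c_0$ enters. I would cover the relevant region by a grid of cubes of side length $\beta \lambda^{-1/d}$ for a small constant $\beta$ to be chosen, so each cube has $O(1)$ expected points. A path with $c_1 \lambda^{1/d}$ nodes must visit at least $c_1 \lambda^{1/d} / (\text{max points per cube along the path})$ distinct cubes; conversely, traversing that many distinct cubes of side $\beta\lambda^{-1/d}$ while maintaining a power-weighted length budget of $c_0 \lambda^{(1-p)/d}$ constrains how the path can move between cubes. The argument should run: either the path's cubes are ``spread out'' (many distinct cubes, forcing long total Euclidean length, hence — by the power-mean inequality in reverse, since $p>1$ penalizes long edges — too large a power-weighted length, contradicting the $c_0$ budget once $c_1/c_0$ exceeds $\rho_0$), or the path's cubes are ``concentrated'' (few distinct cubes containing many nodes each), which forces some cube to contain $\Omega(\lambda^{1/d} / (\text{small}))$ points — a deviation of order $\lambda^{1/d}$ above the $O(1)$ Poissonian mean in that cube. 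A union bound over the $O(\lambda)$ candidate cubes (polynomial in $\lambda$), combined with the Poisson upper-tail estimate $\Pr\{\mathrm{Pois}(\mu) \geq k\} \leq (e\mu/k)^k e^{-\mu}$, kills the concentrated case with a bound exponentially small in $\lambda^{1/d}$. The constant $\rho_0$ is chosen exactly so the spread-out case is vacuous. I expect the delicate bookkeeping to be in balancing $\beta$ against $\rho_0$ so that both horns close simultaneously, and in handling the number of ``path shapes'' (sequences of adjacent cubes) via a standard self-avoiding-walk counting bound of the form $C^{(\text{number of cubes})}$, which must be absorbed into the exponential decay — this forces the number of visited cubes to itself be $\Theta(\lambda^{1/d})$ for the counting to be controlled, which it is, since the path has $\Theta(\lambda^{1/d})$ nodes and bounded-degree cube adjacency.
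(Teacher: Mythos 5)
Your power-mean arclength bound is correct and the intuition that rare events are at play is right, but the spread-out/concentrated dichotomy does not close: there is a middle regime that neither horn handles, and that middle regime is in fact the dominant contribution. Concretely, take the cube side $s = \beta\lambda^{-1/d}$ and consider a path whose $N \approx c_1\lambda^{1/d}$ nodes occupy $N$ \emph{distinct but pairwise adjacent} cubes, one node per cube. Every edge then has Euclidean length at most $2\sqrt{d}\,s$, so the power-weighted length is at most $N(2\sqrt{d}\,\beta)^p\lambda^{-p/d} = c_1(2\sqrt{d}\,\beta)^p\lambda^{(1-p)/d}$, which is below the budget $c_0\lambda^{(1-p)/d}$ as soon as $\beta \leq (c_0/c_1)^{1/p}/(2\sqrt d)$. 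So the spread-out case is \emph{not} vacuous: many distinct cubes do not force long Euclidean length, because transitions between adjacent cubes cost almost nothing. Your claim that ``$\rho_0$ is chosen exactly so the spread-out case is vacuous'' therefore fails. Simultaneously, in this regime the concentrated horn gives nothing, since each cube holds only $O(1)$ nodes, well within a $\mathrm{Pois}(\beta^d)$ typical fluctuation. Thus there is a third case --- $\Theta(\lambda^{1/d})$ cubes, $O(1)$ points per cube --- which is precisely where the whole difficulty lives, and your argument does not bound its probability. To close it you would need to weigh the self-avoiding-walk entropy $C^{\Theta(\lambda^{1/d})}$ against the product of per-cube occupancy probabilities $(1-e^{-\beta^d})^{\Theta(\lambda^{1/d})} \approx (\beta^d)^{\Theta(\lambda^{1/d})}$ and choose $\beta$ small so the product wins; you gesture at the SAW bound but present it as bookkeeping rather than as the core estimate.

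The paper takes an entirely different route that avoids this bookkeeping. It introduces a continuum Galton--Watson process: $\BbbX_n(r_0)$ is the set of Poisson points reachable from the origin in exactly $n$ hops while keeping cumulative power-weighted length at most $r_0$, and the Campbell--Mecke formula gives the closed-form bound $\BbbE|\BbbX_n(r_0)| \le (\lambda V_d r_0^{d/p})^n\,\Gamma(1+d/p)^n / \Gamma(1+nd/p)$. Markov's inequality plus Stirling then show $\Pr\{\BbbX_n(r_0)\neq\emptyset\}$ decays like $\exp\bigl(n\log(V_d\Gamma(1+d/p)(c_0/c_1\cdot pe/d)^{d/p}) + o(n)\bigr)$, which is exponentially small in $n = c_1\lambda^{1/d}$ once $c_1/c_0$ exceeds an explicit threshold. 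This is the analogue of what your discretized SAW-plus-occupancy argument would compute, but done exactly: the branching-process recursion replaces the cube grid, and the Beta-function integral does the entropy-versus-probability balance in one step rather than by a choice of $\beta$. If you want to pursue your version, the lesson is that the entire weight of the proof must go into the middle regime, not into a dichotomy that declares one horn vacuous.
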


\begin{proof}
The structure of the proof is similar to that of \textcite[Theorem~6.1]{Meester:1996a}.  We first define a Galton-Watson process $\BbbX_n$.  Let $\BbbX_0 = \{ x_0 = 0 \in \reals^{d} \}$ be the ancestor of the family, and associate the parameter $r_0 > 0$.  Then define the offspring $\BbbX_1(r_0)$ to be $\cH_{\lambda} \cap B(x_0; r_0^{1/p})$.  $\BbbX_1(r_0)$ is the set of points in $\cH_{\lambda}$ that may be reached from $x_0$ with a single edge with path length at most $r_0$ in power-weighted sense.  Note that $\BbbE \abs{\BbbX_1(r_0)} = \lambda V_d r_0^{d/p}$ where $\abs{\BbbX_1(r_0)}$ denotes the cardinality of $\BbbX_1(r_0)$, and $V_d$ denotes the volume of $B(0; 1)$.

For each offspring $x_{1,k} \in \BbbX_1(r_0)$, we associate the parameter $r_{1,k} = r_0 - \abs{x_{1,k} - x_0}^p$.  Then $\cH_{\lambda}$ in the union of $B(x_{1,k}; r_{1,k}^{1/p}) - \braces{x_{1,k}}$ over $k$ is the set of points that may be reached from $x_0$ with exactly two edges, while the power-weighted path length is at most $r_0$.  Define $\BbbX_2(r_0)$ to be the collection of all the second generation offspring, and define recursively the $n$-th generation offspring $\BbbX_n(r_0)$.  Then $\BbbX_n(r_0)$ is the set of all the points that may be reached in $n$ hops from the ancestor $x_0$ within path length $r_0$.  See \autoref{fig:percolation-treerun}.  We prove by induction that
\begin{equation}\label{eq:poisson-percolation-claim}
    \BbbE \abs[1]{\BbbX_n(r_0)} 
    \leq \paren[1]{ \lambda V_d r_0^{d/p} }^n \frac{\Gamma(1 + d/p)^n}{\Gamma(1 + nd/p)}.
\end{equation}

We mentioned above that $\BbbE \abs{\BbbX_1(r_0)} = \lambda V_d r_0^{d/p}$, and \eqref{eq:poisson-percolation-claim} is true for $n = 1$.  For general $n$, apply the Campbell-Mecke formula \autocite[Theorem 3.2, p.48]{Baddeley:2007a} to see that
\begin{align}
    \BbbE \abs[1]{\BbbX_n(r_0)} 
    &\leq \lambda \int_{B(x_0; r_0^{1/p})} \BbbE \abs[1]{ \BbbX_{n-1}\paren[1]{r_0 - \abs{x - x_0}^p} } \,dx \notag
    \\ &\leq \lambda^n V_d^{n-1} \frac{\Gamma(1 + d/p)^{n-1}}{\Gamma(1 + (n-1)d/p)} \int_{B(x_0; r_0^{1/p})} \paren[1]{r_0 - \abs{x-x_0}^p}^{(n-1)d/p} \,dx.
    \label{eq:poisson-percolation-campbell}
\end{align}
The last integral evaluates to
\begin{align*}
    &\phantom{=} \int_{B(x_0; r_0^{1/p})} \paren[1]{r_0 - \abs{x - x_0}^p}^{(n-1)d/p} \,dx
    \\ &= V_d r_0^{(n-1)d/p} d \int_0^{r_0^{1/p}} \paren[2]{1 - \frac{u^p}{r_0}}^{(n-1)d/p} u^{d-1} \,du
    \\ &= V_d r_0^{nd/p} \frac dp \int_0^1 (1 - v)^{(n-1)d/p} v^{d/p - 1} \,dv
    \\ &= V_d r_0^{nd/p} \frac{\Gamma(1 + d/p) \Gamma(1 + (n-1)d/p)}{\Gamma(1 + nd/p)}
\end{align*}
Note that a spherical coordinate transformation was used in the first equality, a transformation $v = u^p / r_0$ was used in the second equality, and the third equality was obtained by properties of the beta function.  Substituting the expression in the last line into \eqref{eq:poisson-percolation-campbell} establishes \eqref{eq:poisson-percolation-claim}.

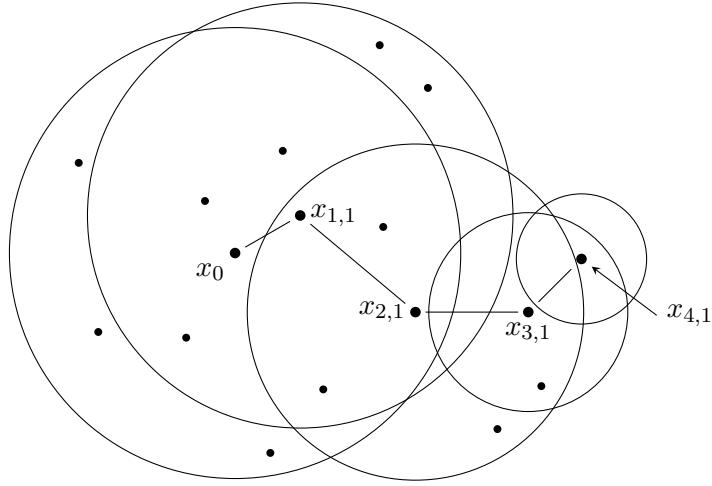
\begin{figure}
\centering
\begin{tikzpicture}
    \path (0,0) node (xzero) {};
    \path (xzero)  -- +(30:1)  node (xone)   {};
    \path (xone)   -- +(-40:2) node (xtwo)   {};
    \path (xtwo)   -- +(0:1.5) node (xthree) {};
    \path (xthree) -- +(45:1)  node (xfour)  {};

    \draw (xzero)  circle (3);
    \draw (xone)   circle (2.8284);
    \draw (xtwo)   circle (2.2361);
    \draw (xthree) circle (1.3229);
    \draw (xfour)  circle (0.8660);

    \fill (xzero)  circle (2pt) node [below left] {$x_0$};
    \fill (xone)   circle (2pt) node [right]      {$x_{1,1}$};
    \fill (xtwo)   circle (2pt) node [left]       {$x_{2,1}$};
    \fill (xthree) circle (2pt) node [below]      {$x_{3,1}$};
    \fill (xfour)  circle (2pt);
    \draw [stealth-] (xfour) -- +(1,-.75) node [right] {$x_{4,1}$};

    \draw [thin] (xzero) -- (xone) -- (xtwo) -- (xthree) -- (xfour);

    \fill (10:2)    circle (1.5pt);
    \fill (65:1.5)  circle (1.5pt);
    \fill (120:0.8) circle (1.5pt);
    \fill (150:2.4) circle (1.5pt);
    \fill (240:1.3) circle (1.5pt);
    \fill (210:2.1) circle (1.5pt);
    \fill (280:2.7) circle (1.5pt);
    \fill (xone) +(45:2.4)  circle (1.5pt);
    \fill (xone) +(65:2.5)  circle (1.5pt);
    \fill (xtwo) +(-55:1.9) circle (1.5pt);
    \fill (xtwo) +(220:1.6) circle (1.5pt);
    \fill (xthree) +(-80:1.0) circle (1.5pt);
\end{tikzpicture}
\caption{A run through the family tree generated by $\BbbX_n$ with $p = 2$.  The point $x_0$ is the ancestor with parameter $r_0 = 9$.  This means that all the runs through the family tree are paths with power-weighted length less than $r_0^{1/p} = 3$.  Here $x_{1,1} \in \BbbX_1$ is among the first generations since it is within $B(x_0; r_0^{1/p})$, and $x_{2,1} \in \BbbX_2$ is among the second generations since it is within the balls centered at the first generation offsprings, e.g., $x_{1,1}$.  This particular run ends at $x_{4,1}$ as there is no point in the vicinity.  In this example, the power-weighted path length is $\sqrt{1^2+2^2+1.5^2+1^2} = \sqrt{8.25} < 3$.  Note that $x_{2,1}$ is also in the ball centered at $x_0$, so it is also a first generation offspring.  Some other runs through the family tree will have the point $x_{2,1}$ as a first generation offspring.}
\label{fig:percolation-treerun}
\end{figure}

Using the Markov inequality and Stirling's approximation, we have
\begin{equation*}
    \log \Pr \paren{\BbbX_n(r_0) \neq \emptyset}
    \leq n \log \paren[3]{ V_d \Gamma \paren[2]{1 + \frac{d}{p}} \paren[2]{\frac{c_0}{c_1} \cdot \frac{p e}{d}}^{d/p} } + O(\log n)
\end{equation*}
as $n\to\infty$.
Note that if a path starting at $x$ passes through more than $n \geq c_1 \lambda^{1/d}$ nodes and has path length less than $r_0 \leq c_0 \lambda^{(1-p)/d}$, then the $n$-th generation set $\BbbX_n(r_0)$ will not be empty.
\autoref{thm:poisson-exit-pr} follows since, if the ratio $c_1 / c_0$ is sufficiently large, the logarithm term above is negative.
\end{proof}

%An immediate use of \autoref{thm:poisson-exit-pr} is to show that $\cL_\lambda(x,y)$ is a well defined variable for $x,y \in \reals^d$.  Since $\cH_\lambda$ consists of infinite points with probability one, the minimum path may not exist in general.

\subsection{Mean convergence for Poisson point processes}
\begin{lemma}\label{thm:poisson-mean}
Consider the shortest path length $\cL_1(0,te_1)$ from $0 \in \reals^d$ to $te_1 \in \reals^d$ in $\cH_1$ for $t > 0$.
Then the limit
\begin{equation}\label{eq:poisson-shape}
    \lim_{t\to\infty} \frac1t \BbbE \cL_1(0,te_1) = C(d,p)
\end{equation}
exists.

In addition, if $b = b_t$ is a function of $t$ satisfying $\liminf_t b_t = \infty$, then
\begin{equation}\label{eq:poisson-shape-local}
    \lim_{t\to\infty} \frac1t \BbbE \cL_1(0,te_1; b_t) = C(d,p).
\end{equation}
\end{lemma}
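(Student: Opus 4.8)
The plan is to derive \eqref{eq:poisson-shape} by a first‑passage‑percolation subadditivity argument and then to bootstrap to \eqref{eq:poisson-shape-local} by splitting the segment $[0,te_1]$ into short pieces. For \eqref{eq:poisson-shape} the only subtlety is that $\cL_1(x,y)$ joins the \emph{arbitrary} points $x,y$ to the cloud $\cH_1$, so a concatenation of two such paths runs through an extra non‑Poisson junction and $\cL_1$ is not literally subadditive. I would therefore work with the genuine point‑to‑point passage time $\widetilde\cL(u,v) := L\paren{\mathrm n(u),\mathrm n(v);\cH_1}$, where $\mathrm n(w)$ is the point of $\cH_1$ nearest to $w$. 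Concatenating at a common Poisson point gives $\widetilde\cL(0,(s+t)e_1) \le \widetilde\cL(0,se_1) + \widetilde\cL(se_1,(s+t)e_1)$ pathwise, so by stationarity of $\cH_1$ the function $a_t := \BbbE\widetilde\cL(0,te_1)$ is subadditive on $(0,\infty)$; it is locally bounded, since $a_t \le \BbbE\paren{\abs{\mathrm n(0)} + t + \abs{\mathrm n(te_1)-te_1}}^p < \infty$ (nearest‑point distances in $\cH_1$ have all moments), and nonnegative, so Fekete's lemma gives $a_t/t \to C(d,p) := \inf_{t>0} a_t/t$. Positivity of $C(d,p)$ is the continuum shape theorem, and the constant agrees with the $\mu$ of \textcite{Howard:1997a,Howard:2001a}; Kingman's subadditive ergodic theorem would moreover give the almost‑sure limit, but only mean convergence is needed here.

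It remains to replace $\widetilde\cL$ by $\cL_1$. Appending the two end links gives $\cL_1(0,te_1) \le \widetilde\cL(0,te_1) + \abs{0-\mathrm n(0)}^p + \abs{te_1-\mathrm n(te_1)}^p$, a correction of bounded expectation; and, writing $v_1,v_{k-1}$ for the first and last Poisson points on the $\cL_1$‑geodesic, the triangle inequality for point‑to‑point passage times gives $\widetilde\cL(0,te_1) \le \cL_1(0,te_1) + \abs{\mathrm n(0)-v_1}^p + \abs{v_{k-1}-\mathrm n(te_1)}^p$. The corrections here are controlled by the $p$‑th powers of the first and last geodesic edges, and these are negligible in mean: on the event that the geodesic lies in $B(0; O(t\,\mathrm{polylog}\,t))$ — which follows from an elementary cell‑covering upper bound $\cL_1(0,te_1) = O(t\,\mathrm{polylog}\,t)$, from \autoref{thm:poisson-exit-pr} applied in rescaled coordinates to bound the number of geodesic edges by $O(t\,\mathrm{polylog}\,t)$, and from the power‑mean inequality $\cL_1(0,te_1) \ge \abs{0-te_1}^p/(\#\text{edges})^{p-1}$ — the empty‑lens device from the proof of \autoref{thm:euc-shortlink}, applied over the polynomially many relevant pairs of Poisson points, shows that every geodesic edge has length $O((\log t)^{1/d})$ with failure probability $o(t^{-p})$, while off that event $\cL_1(0,te_1) \le \abs{0-te_1}^p = t^p$ deterministically. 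Hence $\BbbE\cL_1(0,te_1) = \BbbE\widetilde\cL(0,te_1) + o(t)$, which gives \eqref{eq:poisson-shape}.

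For \eqref{eq:poisson-shape-local}, the inequality $\liminf_t \tfrac1t\BbbE\cL_1(0,te_1;b_t) \ge C(d,p)$ is immediate because $\cH_1 \cap T(0,te_1;b_t) \subset \cH_1$ and deleting points only lengthens shortest paths. For the reverse inequality, pick $\ell_t \to \infty$ with $\ell_t(\log\ell_t)^{(p-1)/d} \le b_t$ (possible since $b_t \to \infty$), set $m = \lceil t/\ell_t\rceil$, and split $[0,te_1]$ into $m$ subsegments of length at most $\ell_t$. The same cell‑covering / \autoref{thm:poisson-exit-pr} / power‑mean estimates show that over a length‑$\ell_t$ subsegment the \emph{unrestricted} geodesic stays — with failure probability at most $\ell_t^{-p-1}$ — inside the tube of radius $O(\ell_t(\log\ell_t)^{(p-1)/d}) \le b_t$ around that subsegment, hence inside $T(0,te_1;b_t)$; on that event the tube‑restricted shortest path over the subsegment coincides with the unrestricted one, so by \eqref{eq:poisson-shape} and stationarity its expectation is $(C(d,p)+o(1))\ell_t$, and on the complement it is at most $\ell_t^p$, contributing only $O(\ell_t^{-1})$. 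Concatenating the $m$ subsegment paths, and pasting at Poisson points near the $m-1$ division points at total extra cost $o(t)$ in expectation (there are $o(t)$ of them and each paste costs $O(\mathrm{polylog}\,t)$), produces a path through $\cH_1 \cap T(0,te_1;b_t)$, so $\BbbE\cL_1(0,te_1;b_t) \le m(C(d,p)+o(1))\ell_t + o(t) = (C(d,p)+o(1))t$. Treating the pieces individually — rather than intersecting $m$ good events — is what keeps the error terms under control when $b_t$, hence $\ell_t$, grows slowly.

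The step I expect to be the main obstacle is the Euclidean first‑passage geometry feeding both parts: controlling the lengths of the first and last edges of a geodesic (for \eqref{eq:poisson-shape}) and the transversal straying of geodesics over a prescribed length scale (for \eqref{eq:poisson-shape-local}). These are standard facts in the spirit of \textcite{Howard:1997a}, but here I would reprove them cheaply from \autoref{thm:poisson-exit-pr}, the empty‑lens argument inside the proof of \autoref{thm:euc-shortlink}, and a crude covering bound, keeping the whole argument self‑contained.
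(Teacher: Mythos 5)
Your proposal is correct in outline but follows a genuinely different route from the paper. For \eqref{eq:poisson-shape} you regularize to a point-to-point passage time $\widetilde\cL$ anchored at nearest Poisson points, getting \emph{exact} subadditivity and then transferring back to $\cL_1$ with geodesic edge-length estimates; the paper instead notes that \eqref{eq:poisson-shape} is already contained in \textcite{Howard:2001a} and runs a single \emph{approximate} subadditivity argument directly on the tube-restricted quantity, uniformly in the tube radius $b$ (including $b=\infty$): the junction at the non-Poisson point $se_1$ is removed and the two adjacent Poisson nodes are joined, the convexity overshoot being controlled by the moment bound \eqref{eq:xi-upperbound} on $\xi_0(1,b)$, after which a continuous Fekete lemma yields a limit $C(d,p;b)$. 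For \eqref{eq:poisson-shape-local} the difference is sharper: you give a quantitative construction (split $[0,te_1]$ into length-$\ell_t$ pieces, confine each unrestricted sub-geodesic to a radius-$O(\ell_t\,\mathrm{polylog}\,\ell_t)$ tube via \autoref{thm:poisson-exit-pr}, the empty-lens bound and a covering estimate, then paste), whereas the paper's passage from fixed $b$ to $b_t\to\infty$ is soft: monotonicity $\cL_1(0,te_1;b_t)\le\cL_1(0,te_1;B)$ once $b_t\ge B$, plus the fact that $C(d,p;b)\to C(d,p)$, proved by evaluating the Fekete infimum at one fixed large $T$ and using monotone convergence $\cL_1(0,Te_1;b)\downarrow\cL_1(0,Te_1)$ together with the $b$-uniform $\xi$-bound. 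Your route is self-contained and more explicit, but it buys this at the cost of all the geodesic-localization and maximal-edge machinery that the paper's $\xi$-moment bound makes unnecessary. Two spots in your sketch need tightening (both fixable with your own tools): the pasting bookkeeping at the $m-1$ division points when a piece's confinement event fails --- there the paste can cost order $\ell_t^p$, not $\mathrm{polylog}$, and must be absorbed in expectation against the failure probability, exactly as you do for the pieces themselves; and in the reverse comparison $\widetilde\cL\le\cL_1+\dots$, off the good event $\widetilde\cL$ is not deterministically bounded by $t^p$ (its bound involves the nearest-point distances), so for $p\ge 2$ the stated failure probability $o(t^{-p})$ should be strengthened (cheaply, by enlarging the constant in the $O((\log t)^{1/d})$ edge bound) or combined with a Cauchy--Schwarz step.
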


Recall that $\cL_1(0,te_1; b_t)$ denotes $L(0,te_1; \cH_1 \cap T(0, te_1; b_t))$ from \eqref{eq:Tuvb-def} and \eqref{eq:cL-def}.

When $T(0, te_1; b) = \reals^d$, i.e., $b = +\infty$, \eqref{eq:poisson-shape} is a consequence of, e.g., \textcite[Section~4]{Howard:2001a}.
The main difference is the case when $b < +\infty$.
\textcite[Theorem~2.4]{Howard:2001a} states that the probability that $\cL_1(0,te_1) \neq \cL_1(0,te_1; b_t)$ is exponentially small of order at least $t^{3p\vareps/4}$ when $b_t \geq t^{3/4 + \vareps}$ for some $\vareps > 0$.
\autoref{thm:poisson-mean} is weaker in the sense that it only asserts closeness in the mean.
On the other hand, \autoref{thm:poisson-mean} is stronger in the sense that the assumption on $b_t$ is relaxed so that $b_t$ need only diverge to infinity, and the rate of growth may even be sub-polynomial.

\begin{proof}[Proof of \autoref{thm:poisson-mean}]
Initially we let $b > 0$ be a constant instead of a function of $t$.
This assumption is removed later in the proof.
Recall the definition of function $h$ in \eqref{eq:h-def},
\begin{equation*}
    h(x,y; u) = \abs{x-u}^p + \abs{u-y}^p - \abs{x-y}^p.
\end{equation*}
Let
\begin{equation*}
    T(b) = \bigcup_{s > 0} T(-se_1, +se_1; b),
\end{equation*}
and let
\begin{equation*}
  \xi_t(\lambda, b) = \sup \braces[2]{ \abs{u-te_1} \colon u \in T(b),\, h(u,te_1; v) \geq 0 \text{ for all $v \in \cH_\lambda \cap T(b)$} }.
\end{equation*}
In other words, $\xi_t(\lambda, b)$ denotes an upper bound distance of $u \in T(b)$ from $te_1$ such that the shortest path from $te_1$ to $u$ is the direct path $te_1 \to u$.
From the continuity of function $h$, it is not difficult to show that there exist constants $A,\delta > 0$ and constant integers $k,m > 0$, all independent of $b$ and $\lambda$, such that for all $t \in \reals$,
\begin{equation}\label{eq:xi-upperbound}
    \BbbE \xi_t(\lambda, b)^p
    \leq \frac{k \Gamma(1 + p/d)}{(\lambda A)^{p/d}} + \frac{m 2^p \Gamma(1+p)}{\lambda^p (\delta b)^{p(d-1)}}.
\end{equation}
It is not surprising that the upper bound does not depend on $t$ since $\cH_\lambda$ is homogeneous.
For a simple proof of this see \textcite[Lemma~2.5, Equation~2.14]{Hwang:2012b}.

\begin{figure}
\centering
\begin{tikzpicture}[scale=4]
    \draw (0,0) node [anchor=north east] {$0$} -- (.3,-.3) -- (.5,.2) -- (.8,.4) node [above] {$\gamma_{-}$};
    \draw [dashed] (.8,.4) -- (1,0) node [below] {$\phantom{(}se_1 = \gamma_0\phantom{)}$} -- (1.2,.2) node [anchor=south west] {$\gamma_{+}$};
    \draw (1.2,.2) -- (1.4,-.2) -- (1.6,-.4) -- (1.9,.2) -- (2,0) node [anchor=north west] {$(s+t)e_1$};
    \draw [very thick] (.8,.4) -- (1.2,.2);

    \draw [-latex] (-.5,0) -- (2.5,0);
    \fill [black,opacity=.5] (0,0) circle (.5pt) (.3,-.3) circle (.5pt) (.5,.2) circle (.5pt) (.8,.4) circle (.5pt) (1,0) circle (.5pt);
    \fill [black,opacity=.5] (1.2,.2) circle (.5pt) (1.4,-.2) circle (.5pt) (1.6,-.4) circle (.5pt) (1.9,.2) circle (.5pt) (2,0) circle (.5pt);
\end{tikzpicture}
\caption{An illustration of the path pasting procedure.  A new path from $0$ to $(s+t)e_1$ is created by removing $se_1 = \gamma_0$ and joining $\gamma_{-}$ and $\gamma_{+}$.  Only the end points are fixed points in the new path.}
\label{fig:path-paste}
\end{figure}
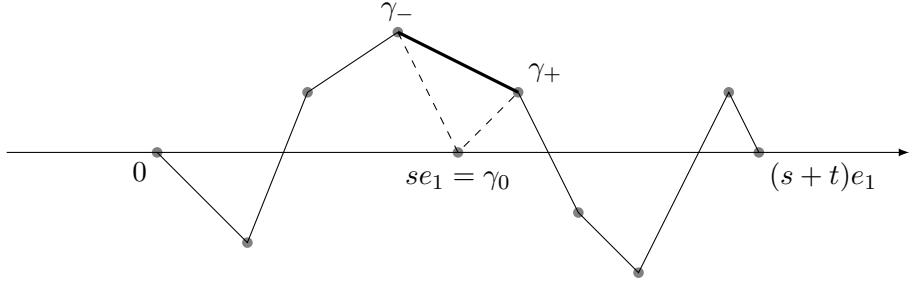

Let $s,t > 0$.  Consider the shortest path $\cL_1(0,se_1; b)$ between $0$ and $se_1$, and let $\gamma_{-}$ denote the node that directly connects to $se_1$.  Similarly consider the shortest path for $\cL_1(se_1,(s+t)e_1; b)$ and let $\gamma_{+}$ denote the node that directly connects to $se_1$.  Therefore $\gamma_{-}$ and $\gamma_{+}$ are Poisson sample points incident to $se_1$.  For convenience let $\gamma_0 = se_1$.  Remove $\gamma_0 = se_1$ in the two paths, and join the nodes $\gamma_{-}$ and $\gamma_{+}$ so that we have a new path connecting $0$ and $(s+t)e_1$, as indicated in \autoref{fig:path-paste}.  This new path has length that is an upper bound on $\cL_1(0,(s+t)e_1; b)$,
\begin{equation*}
    \cL_1(0,(s+t)e_1; b)
    \leq \cL_1(0,se_1; b) + \cL_1(se_1; (s+t)e_1; b)
        + (\abs{\gamma_0 - \gamma_{-}} + \abs{\gamma_{+} - \gamma_0})^p.
\end{equation*}
Note that both $\abs{\gamma_0 - \gamma_{-}}$ and $\abs{\gamma_{+} - \gamma_0}$ are bounded above by $\xi_s(1, b)$, and $\BbbE \xi_s(1,b)^p$ is finite by \eqref{eq:xi-upperbound}.
Therefore $\BbbE \cL_1(0,(s+t)e_1; b)$ is bounded above by
\begin{align*}
    &\BbbE \cL_1(0,se_1; b) + \BbbE \cL_1(se_1; (s+t)e_1; b) + \BbbE (2 \xi_s(1, b))^p
    \\ = {} & \BbbE \cL_1(0,se_1; b) + \BbbE \cL_1(0; te_1; b) + \BbbE (2 \xi_0(1, b))^p.
\end{align*}
The equality holds by the translation invariant property of the distribution of $\cH_1$.  Therefore $\BbbE \cL_1(0,te_1; b) + \BbbE (2 \xi_0(1, b))^p$ is a sub-additive function of $t$.  Note that $\BbbE \cL_1(0,te_1; b) \leq t^p$.  A standard proof of Fekete's lemma \autocite[for example, see][Lemma~1.2.1]{Steele:1997a} may be easily adapted to sub-additive functions that are bounded in bounded intervals.  Apply Fekete's lemma to the sub-additive function $\BbbE \cL_1(0,te_1; b) + \BbbE (2 \xi_0(1, b))^p$, then
\begin{equation}\label{eq:poisson-mean-fekete}
    \lim_{t\to\infty} \frac{\BbbE \cL_1(0,te_1; b) + \BbbE (2 \xi_0(1, b))^p}{t}
    = \inf_{t>0} \frac{\BbbE \cL_1(0,te_1; b) + \BbbE (2 \xi_0(1, b))^p}{t},
\end{equation}
and we denote the limit by $\kappa(d,p; b)$.  Note that $\BbbE \xi_0(1,b)^p$ does not depend on $t$, hence \eqref{eq:poisson-mean-fekete} implies that $\lim_t t^{-1} \BbbE \cL_1(0,te_1; b) = \kappa(d,p; b)$.  %In particular, \eqref{eq:poisson-shape} holds.

Define
\begin{equation}\label{eq:cdp-definition}
    C(d,p) = \lim_{t\to\infty} \frac{\BbbE \cL_1(0,te_1)}{t}.
    %= \lim_{t\to\infty} \lim_{b\to\infty} \frac{\BbbE \cL_1(0,te_1; b)}{t}.
\end{equation}

We now show that $\kappa(d,p; b)$ converges to $C(d,p)$ when $b\to\infty$.  % i.e.,
%\begin{equation*}
%    \lim_{b\to\infty} \lim_{t\to\infty} \frac{\BbbE \cL_1(0,te_1; b)}{t}
%    = \lim_{t\to\infty} \lim_{b\to\infty} \frac{\BbbE \cL_1(0,te_1; b)}{t}.
%\end{equation*}
Choose an arbitrary $\vareps > 0$.
By \eqref{eq:xi-upperbound} and by the fact that $C(d,p)$ is the limit of $t^{-1} \BbbE \cL_1(0,te_1)$, there exists $T > 0$ such that
\begin{equation*}
    \frac1T \BbbE \cL_1(0,Te_1) < C(d,p) + \frac{\vareps}{3},
\end{equation*}
and
\begin{equation*}
    \frac1T \BbbE (2 \xi_0(1, b))^p < \frac{\vareps}{3},
\end{equation*}
for all $b > 1$.  For this fixed $T$, note that $\lim_{b\to\infty} \cL_1(0,Te_1; b) = \cL_1(0,Te_1)$ monotonically from above almost surely, and by the monotone convergence theorem, there exists $B > 1$ such that for all $b > B$ and fixed $T$,
\begin{equation*}
    \frac1T \BbbE \cL_1(0,Te_1; b)
    \leq \frac1T \BbbE \cL_1(0,Te_1) + \frac{\vareps}{3}.
\end{equation*}
Combining the three inequalities above with \eqref{eq:poisson-mean-fekete} we obtain
\begin{equation*}
    \kappa(d,p; b)
    \leq \frac1T \paren[1]{\BbbE \cL_1(0,Te_1; b) + \BbbE (2 \xi_0(1, b))^p}
    \leq C(d,p) + \vareps,
\end{equation*}
for all $b > B$.  Therefore $\kappa(d,p; b)$ converges to $C(d,p)$ as $b\to\infty$.

Finally, suppose $b = b_t$ is a function of $t$ rather than a constant.  If $\liminf_t b_t = \infty$ then
\begin{align*}
    C(d,p)
    &\leq \lim_{t\to\infty} \frac1t \BbbE \cL_1(0, te_1; b_t)
    \\ &\leq \lim_{t\to\infty} \frac1t \BbbE \cL_1(0, te_1; B)
    = \kappa(d,p; B),
\end{align*}
for any fixed $B > 0$.  \eqref{eq:poisson-shape-local} follows as $B\to\infty$ on the right side.
\end{proof}

For the readers' benefit we establish two use cases of \autoref{thm:poisson-mean}.

\begin{corollary}
\label{thm:poisson-mean-cor}
The following two cases follow from \autoref{thm:poisson-mean}.

  \begin{enumerate}[label=\textup{(\roman*)}]
    \item \label{it:poisson-mean-i1}
    For every $\vareps > 0$ there exists a constant $t_0 > 0$ such that for all $\lambda > 0$ and $r > 0$ satisfying $\lambda^{1/d} r > t_0$,
    \begin{equation*}
      \abs*{\frac{\BbbE \cL_\lambda(0, r e_1; r)}{r \lambda^{(1-p)/d}} - C(d,p)} < \vareps.
    \end{equation*}

    \item \label{it:poisson-mean-i2} Let $z \in \reals^d$ and $R_2 > R_1 > 0$.
    Let $b > 0$ and $\vareps > 0$.
    Then there exists $\lambda_0 > 0$ such that for all $\lambda \geq \lambda_0$ and $x,y \in B(z; R_1)$ with $\abs{x-y} \geq b$, we have
    \begin{equation*}
      \abs*{\frac{\BbbE L(x,y; \cH_\lambda \cap B(z; R_2))}{\lambda^{(1-p)/d} \abs{x-y}} - C(d,p)} < \vareps.
    \end{equation*}
\end{enumerate}
\end{corollary}

\begin{proof}
$\cH_1$ scaled by factor of $\lambda^{-1/d}$ has identical distribution to $\cH_\lambda$.
At the same time, power-weighted shortest path lengths are scaled by factor of $\lambda^{-p/d}$.
From \eqref{eq:poisson-shape-local}, we have
\begin{equation}\label{eq:poisson-mean-scale}
  \frac{\BbbE \cL_\lambda(0, \lambda^{-1/d} te_1; \lambda^{-1/d} b_t)}{\lambda^{(1-p)/d} \lambda^{-1/d} t}
  = \frac1t \BbbE \cL_1(0, te_1; b_t).
\end{equation}
Choose $b_t = t$ and $t = \lambda^{1/d} r$ to obtain \autoref{it:poisson-mean-i1} from \autoref{thm:poisson-mean}.

For \autoref{it:poisson-mean-i2}, note that
\begin{equation*}
  \cL_\lambda(x,y)
  \leq L(x,y; \cH_\lambda \cap B(z; R_2))
  \leq \cL_\lambda(x,y; R_2 - R_1)
\end{equation*}
since $x,y \in B(z; R_1)$.
By translation- and rotation-invariance of $\cH_\lambda$,
\begin{equation*}
  \BbbE \cL_\lambda(0,\abs{x-y} e_1)
  \leq \BbbE L(x,y; \cH_\lambda \cap B(z; R_2))
  \leq \BbbE \cL_\lambda(0,\abs{x-y} e_1; R_2 - R_1).
\end{equation*}
Choose $t = \lambda^{1/d} \abs{x-y}$ and $b_t = \lambda^{1/d} (R_2 - R_1)$ for \eqref{eq:poisson-mean-scale}.
Then \autoref{it:poisson-mean-i2} follows from \autoref{thm:poisson-mean}.
\end{proof}

\subsection{Shortest path size}
In order to prove \autoref{thm:Ln-size}, we need an upper bound for shortest path lengths in $\cH_\lambda$.

\begin{lemma}\label{thm:poisson-upper-bound}
Let $z \in \reals^d$, $R_2 > R_1 > 0$.
Let $b > 0$ and $\vareps > 0$.
For every $x,y \in B(z; R_1)$, let $E_\lambda(x,y,\vareps)$ denote the event that
\begin{equation}\label{eq:poisson-upper-bound-edef}
    \frac{L(x,y; \cH_\lambda \cap B(z; R_2))}{\lambda^{(1-p)/d} \abs{x-y}} \leq C(d,p) + \vareps.
\end{equation}
Then there exist $\lambda_0 > 0$ and $\theta_7 > 0$ such that for all $\lambda \geq \lambda_0$ and $\abs{x-y} \geq b$,
\begin{equation}
\label{eq:poisson-upper-bound}
  1 - \Pr(E_\lambda(x,y,\vareps))
  \leq \exp\paren*{ -\theta_7 \lambda^{1/(d+2p-1)} }.
\end{equation}
\end{lemma}

\begin{proof}
%Let $b_\lambda$ be a positive-valued function of $\lambda$ such that $b_\lambda \to 0$ and $\lambda^{1/d} b_\lambda \to\infty$ as $\lambda\to\infty$.
Let $0 < r < R_2 - R_1$.
Recall the notation $T(x,y; r)$ and $\cL_\lambda(x,y; r)$ from \eqref{eq:Tuvb-def} and \eqref{eq:cL-def}.
%Since $b_\lambda \to 0$, for all sufficiently large $\lambda$,
Since $x,y \in B(z; R_1)$ and $r < R_2 - R_1$,
\begin{equation*}
    \cH_{\lambda} \cap T(x,y; r)
    \subset \cH_\lambda \cap B(z; R_2),
\end{equation*}
and hence
\begin{equation*}
    L(x,y; \cH_\lambda \cap B(z; R_2))
    \leq \cL_\lambda(x,y; r).
\end{equation*}
Let $E^\prime_\lambda(x,y,\vareps)$ denote the event that \eqref{eq:poisson-upper-bound-edef} holds with $\cL_\lambda(x,y; r)$ in place of $L(x,y; \cH_\lambda \cap B(z; R_2))$.
By the inequality above, if $E^\prime_\lambda(x,y,\vareps)$ occurs then $E_\lambda(x,y,\vareps)$ occurs, hence $\Pr(E^\prime_\lambda(x,y,\vareps)) \leq \Pr(E_\lambda(x,y,\vareps))$.
Therefore it is sufficient to show that \eqref{eq:poisson-upper-bound} holds with $1 - \Pr(E^\prime_\lambda(x,y,\vareps))$ in place of $1 - \Pr(E_\lambda(x,y,\vareps))$.
%Fix sufficiently large $\lambda$ so that the inequalities above hold, and set $b = b_\lambda$.

As in \autoref{thm:poisson-mean}, by the convex property of the power functions, $\cL_\lambda(0,2re_1; r)$ may be bounded above by $\cL_\lambda(0,re_1; r) + \cL_\lambda(re_1,2re_1; r) + (2^{p-1} - 1) (Z_1^p + Y_0^p)$, where $Z_k$ and $Y_k$ are the first and the last edge lengths in $\cL_\lambda(kre_1,(k+1)re_1; r)$, respectively.  In \autoref{fig:path-paste}, when $s = r$ and $s+t = 2r$, $Z_1$ and $Y_0$ correspond to $\abs{\gamma_{+} - \gamma_0}$ and $\abs{\gamma_0 - \gamma_{-}}$, respectively.

Note that the shortest path for $\cL_\lambda(kre_1,(k+1)re_1; r)$ is not likely to be the direct path $kre_1 \to (k+1)re_1$.  That is, if it were the direct path, then as in the proof of \autoref{thm:euc-shortlink}, there exists $\delta > 0$ such that $\cH_\lambda$ is empty in the open ball of radius $\delta r$ centered at the middle of $kre_1$ and $(k+1)re_1$.
Such event happens with probability at most $\exp( -\lambda \theta^\prime r^d )$, where $\theta^\prime$ denotes the volume of an open ball of radius $\delta$.
If none of the shortest paths for $\cL_\lambda(kre_1, (k+1)re_1; r)$ is a direct path, then the previous pasting procedure used in \autoref{thm:poisson-mean} may be repeated so that
\begin{equation}\label{eq:poisson-limsup-eq1}
    \cL_\lambda(0,mre_1; r)
    \leq \sum_{k=0}^{m-1} \paren[2]{ \cL_\lambda\paren[1]{kre_1,(k+1)re_1; r} + (2^{p-1} - 1) (Z_k^p + Y_k^p) },
\end{equation}
with probability at least $1 - m \exp( - \lambda \theta^\prime r^d)$.

If $k,l$ are integers and $l - k \geq 3$, then $T(kre_1, (k+1)re_1; r)$ and $T(lre_1, (l+1)re_1; r)$ are disjoint, hence $\cL_\lambda(kre_1,(k+1)re_1; r)$ and $\cL_\lambda(lre_1, (l+1)re_1; r)$ are mutually independent, and so are $Z_k$ and $Z_l$, as well as $Y_k$ and $Y_l$.  Then the sum in \eqref{eq:poisson-limsup-eq1} may split into $K \geq 3$ sums of independent variables, and each sum has at least $\floor{m / K}$ summands.  Note that each summand is almost surely bounded since $Z_k^p + Y_k^p \leq \cL_\lambda(kre_1, (k+1)re_1; r) \leq r^p$.  Apply Azuma-Hoeffding's inequality \autocite{Azuma:1967a} for $K = 4$ separate sequences to obtain
\begin{multline}\label{eq:poisson-upper-bound-oneeps}
    \Pr \paren*{ \frac{ \cL_\lambda(0,mre_1; r) }{ \lambda^{(1-p)/d} mr } \geq \mu_r + \vareps }
    \\ \leq m \exp\paren[1]{-\lambda \theta^\prime r^d}
        + 4 \exp\paren*{ -\frac{(m-3) \vareps^2}{2^{1+2p} (\lambda^{1/d} r)^{2(p-1)}} },
\end{multline}
where
\begin{equation*}
  \mu_r = \frac{\BbbE \cL_\lambda(0,re_1; r) + (2^{p-1} - 1) (\BbbE Z_0^p + \BbbE Y_0^p)}{r \lambda^{(1-p)/d}}.
\end{equation*}
Let $\beta = 1/(d+2p-1)$.
Set $mr = \abs{x-y}$ and $m = \floor{ (\lambda^{1/d} \abs{x-y})^{1-\beta} }$.
Note that $r = \abs{x-y} / m \leq 2 R_1 / m$ is less than $R_2 - R_1$ when $\lambda^{1/d} \abs{x-y}$ is sufficiently large.
By the definition, both $\BbbE Z_k^p$ and $\BbbE Y_{k-1}^p$ are bounded above by $\BbbE \xi_{kb}(\lambda, r)^p = \BbbE \xi_0(\lambda, r)^p$ in \eqref{eq:xi-upperbound}, and a direct computation with \eqref{eq:xi-upperbound} shows that $\BbbE \xi_0(\lambda, r)^p$ divided by $\lambda^{(1-p)/d} r$ shrinks to zero when $\lambda^{1/d} r \geq (\lambda^{1/d} \abs{x-y})^\beta \to \infty$.
See \textcite[Lemma~2.5]{Hwang:2012b} for more details.  Apply \autoref{thm:poisson-mean-cor} to see that $\mu_r$ converges to $C(d,p)$ as $\lambda^{1/d} r \to\infty$.  Then \eqref{eq:poisson-upper-bound-oneeps} becomes
\begin{multline*}
  \Pr \paren*{ \frac{ \cL_\lambda(x,y; r) }{ \lambda^{(1-p)/d} \abs{x-y} } \geq C(d,p) + 2 \vareps }
  \\
  \begin{aligned}
    &\leq \paren[1]{\lambda \abs{x-y}^d}^{(1-\beta)/d} \exp\paren*{-\theta^\prime (\lambda \abs{x-y}^d)^{\beta}}
      + 4 \exp\paren*{ -\frac{ (\lambda \abs{x-y}^d)^{\beta}}{ 2^{2(p+1)} } \vareps^2},
    \\ &\leq \paren[1]{\lambda (2R_1)^d}^{(1-\beta)/d} \exp\paren*{-\theta^\prime (\lambda b^d)^{\beta}}
      + 4 \exp\paren*{ -\frac{ (\lambda b^d)^{\beta}}{ 2^{2(p+1)} } \vareps^2},
  \end{aligned}
\end{multline*}
for all sufficiently large $\lambda$.
\end{proof}

\begin{proof}[Proof of \autoref{thm:Ln-size}]
Fix constants $A > 1$ and $0 < A' < 1$.
Let $N$ and $N'$ be independent Poisson variables with mean $nA$ and $nA'$, respectively.
Let $a = Af(z)$ and $a' = A'f(z)$.
Let $H_n$ denote the event that $N \geq n$ and $N' \leq n$.
Let $K_n$ denote the event that
\begin{equation}\label{eq:Ln-size-Kn-def}
    \frac{L(x,y; \cH_{na'} \cap B(z; R_2))}{(na')^{(1-p)/d} \abs{x-y}}
    \leq C(d,p) + \frac{\vareps}{2}.
\end{equation}

We first show that if both $H_n$ and $K_n$ occur, then the following conditions are satisfied.
\begin{enumerate}[label=(\roman*)]
  \item \label{it:Ln-size-it1} $L(x,y; \cX_n \cap B(z; R_2))$ is a path in $\cH_{na}$.
  \item \label{it:Ln-size-it2} $L(x,y; \cX_n \cap B(z; R_2)) \leq (C(d,p) + \vareps/2) (n a')^{(1-p)/d} \abs{x-y}$.
\end{enumerate}

Note that restriction of $\cH_{na}$ to $B(z; R_2)$ may be realized as $\cX_N \cap B(z; R_2)$ since $\BbbE N = nA$.
Since $H_n$ is assumed to occur, it follows that $N \geq n$, and $\cX_n \cap B(z; R_2) \subset \cX_N \cap B(z; R_2) = \cH_{na} \cap B(z; R_2) \subset \cH_{na}$.
Therefore \autoref{it:Ln-size-it1} holds.

For condition \autoref{it:Ln-size-it2}, $H_n$ is assumed to occur, so we have $N' \leq n$.
Then similar to the previous argument, $\cH_{na'} \cap B(z; R_2) = \cX_{N'} \cap B(z; R_2) \subset \cX_n \cap B(z; R_2)$ and it follows that $L(x,y; \cX_n \cap B(z; R_2)) \leq L(x,y; \cH_{na'} \cap B(z; R_2))$.
Condition \autoref{it:Ln-size-it2} follows by \eqref{eq:Ln-size-Kn-def}.

Recall that $G_n = G_n(x,y)$ denotes the event $\# L(x,y; \cX_n \cap B(z; R_2))$ is less than or equal to  $C_* (nf(z))^{1/d} \abs{x-y}$.
We have shown that when $H_n$ and $K_n$ occur, \autoref{it:Ln-size-it1} and \autoref{it:Ln-size-it2} hold, and an application of \autoref{thm:poisson-exit-pr} shows that
\begin{equation*}
  %\limsup_n \frac{1}{(n f(z) \abs{x_n-y_n}^d)^{\alpha}} \log\paren*{1 - \Pr\paren{G_n \,\delimsize\vert\, H_n \cap K_n}} = -\infty,
  1 - \Pr\paren{G_n \,\delimsize\vert\, H_n \cap K_n}
  \leq \exp\paren[1]{-C (nf(z))^{1/d}\abs{x-y}}
  \leq \exp\paren[1]{-C^\prime n^{1/d}}
\end{equation*}
for some $C,C^\prime > 0$ when $C_* > (C(d,p) + \vareps/2) A^{p/d} A'^{(1-p)/d} \rho_0$.
See \autoref{thm:poisson-exit-pr} for the constant $\rho_0$.

By \autoref{thm:poisson-upper-bound}, $1 - \Pr(K_n)$ is bounded above by $\exp(-\theta_7 (na^\prime)^{1/(d+2p-1)})$ for sufficiently large $n$, since $a^\prime > 0$ is a fixed constant.
By the Chernoff bound \autocite[Theorem~9.3]{Billingsley:1995a}, $1 - \Pr(H_n)$, i.e., the probability that either $N < n$ or $N' > n$, is exponentially small in $n$.

Note that $1 - \Pr(G_n)$ is bounded above by the sum of $(1 - \Pr(G_n \mid H_n \cap K_n))$, $(1 - \Pr(H_n))$, and $(1 - \Pr(K_n))$.
The lemma follows from the observation that the overall decay is determined by the summand with slowest decay rate, and it is $(1- \Pr(K_n))$, which is exponentially small in $n^{1/(d+2p-1)}$.
\end{proof}

\subsection{Mean convergence in i.i.d.\ cases: de-Poissonization}
\begin{proof}[Proof of \autoref{thm:Ln-mean}]
For convenience, let $L_k$ denote $L(x,y; \cX_k \cap B(z; R_2))$ for all $k \geq 0$.
Recall that $\alpha = 1/(d+2p)$.
Let $\tau_k = (k f(z))^{1/d} \abs{x-y}$, and $\zeta_k = (k f(z))^{(\alpha-1)/d} \abs{x-y}^\alpha$.

Let $C_* > 0$ as in \autoref{thm:Ln-size} and suppose that the number of nodes $\# L_k$ in the shortest path $L_k$ is less than $C_* \tau_k$.
Suppose that the event $F_k(\abs{x-y})$ from \autoref{thm:euc-shortlink} occurred so that all the shortest path edge lengths are at most $\zeta_k$.
When a sample point from $\cX_k$ is discarded, $L_{k-1}$ remains the same as $L_k$ if the discarded sample point were not a node in $L_k$.
Furthermore since edge lengths are at most $\zeta_k$, $L_{k-1}$ and $L_k$ may differ at most by $(2 \zeta_k)^p$.
Therefore
\begin{equation}\label{eq:Ln-mean-eq1}
    \BbbE L_{k-1} - \BbbE L_k
    \leq \frac{C_* \tau_k}{k} (2 \zeta_k)^p + h_k \BbbE L_0,
    %= \frac{C_* (k f(z))^{(1 + (\alpha-1)p)/d} \abs{x_n-y_n}^{1+\alpha p}}{k} + h_k \BbbE L_0,
\end{equation}
where $h_k$ denotes the probability that either $\# L_k > C_* \tau_k$, or the event $F_k(\abs{x-y})$ does not occur.
$\BbbE L_0$ in the last term is chosen because $\BbbE L_k \leq \BbbE L_0$ for all $k > 0$.

Let $N$ be a Poisson variable with mean $n$.
Write
\begin{equation*}
    \BbbE L_N = \sum_{k \geq 0} \BbbE L_k \Pr \paren{N = k}.
\end{equation*}
The difference $\abs{\BbbE L_n - \BbbE L_N}$ is bounded above by
\begin{equation*}
    \sum_{k \geq 0} \abs{\BbbE L_n - \BbbE L_k} \Pr \paren{N = k}
    \leq \BbbE L_0 \Pr \paren*{ N < \frac n2 } + \sum_{k \geq 2^{-1} n} \abs{\BbbE L_n - \BbbE L_k} \Pr \paren{N = k}.
\end{equation*}
Note that the first term on the right of \eqref{eq:Ln-mean-eq1} is monotonically decreasing in $k$, since both $\tau_k / k$ and $\zeta_k$ monotonically decrease in $k$ for fixed $n$.
Therefore for $k \geq 2^{-1} n$,
\begin{equation*}
    \abs{\BbbE L_n - \BbbE L_k}
    \leq \frac{C_* \tau_k}{2^{-1} n} \cdot (2 \zeta_k)^p \abs{n-k}
     + \BbbE L_0 \sum_{l > 2^{-1} n} h_l.
\end{equation*}
Since $\BbbE \abs{N-n} \leq \sqrt n$ and $\BbbE L_0 = \abs{x-y}^p$, after expanding $\tau_k$ and $\zeta_k$ we have
\begin{equation*}
    \frac{\abs{\BbbE L_n - \BbbE L_N }}{(n f(z))^{(1-p)/d} \abs{x-y}}
    \leq O\paren*{\frac{((n f(z))^{1/d} \abs{x-y})^{\alpha p}}{\sqrt n}} + \frac{ \Pr\paren{N < 2^{-1} n} + \sum h_l }{((n f(z))^{1/d} \abs{x_n-y_n})^{1-p}},
\end{equation*}
where the summation $\sum h_l$ is still for $l > 2^{-1} n$.  The first term on the right decays to zero since $n^{\alpha p/d} < \sqrt{n}$ and $\abs{x-y} < 2R_1$.  The second term also decays to zero since, while the denominator has at most polynomial decay in $n$, $\sum h_l$ in the numerator has exponential decay in $n^\alpha$ by \autoref{thm:euc-shortlink} and \autoref{thm:Ln-size}, and $\Pr\paren{N < 2^{-1} n}$ in the numerator has exponential decay in $n$ by the Chernoff bound \autocite[Theorem~9.3]{Billingsley:1995a}.
Note that $\cX_N \cap B(z; R_2)$ is identically distributed as $\cH_{nf(z)} \cap B(z; R_2)$, and the proposition follows since the difference of $\BbbE L_N = \BbbE L(x,y; \cH_{nf(z)} \cap B(z; R_2))$ from $C(d,p)$ is less than $\vareps$ for sufficiently large $n$ by \autoref{thm:poisson-mean-cor}.
\end{proof}

\appendix
\section*{Appendix}
Here we show how \eqref{eq:main-cor} can be derived from \autoref{thm:euc-unif-local}.
%Without loss of generality, assume that $0 < s < C(d,p)$ in \eqref{eq:main-cor}.

As we did in the proof of \autoref{thm:poisson-mean-cor}, scale the space by factor of $\lambda^{-1/d}$ with choice $\lambda = t^d$ to obtain
\begin{equation}
\label{eq:appendix-scale}
  \Pr \paren*{ \abs*{ \frac1t \cL_1(0,te_1) - C(d,p) } > s }
  = \Pr \paren*{ \abs*{ \frac{\cL_\lambda(0,e_1)}{\lambda^{(1-p)/d}} - C(d,p) } > s }.
\end{equation}

Let $N$ be a Poisson random variable with mean $\lambda V_d R^d$, where $V_d$ denotes the volume of a unit ball.
Fix $0 < \delta < 1$ and $0 < \vareps < C(d,p)$.
Suppose that
\begin{enumerate}[label=(\roman*)]
  \item \label{it:append-1} $(1-\delta) \BbbE N \leq N \leq (1+\delta) \BbbE N$, and
  \item \label{it:append-2} the events $E_n(\vareps)$ from \autoref{thm:euc-unif-local} occur for all $n$ in the range $(1-\delta) \BbbE N \leq n \leq (1+\delta) \BbbE N$.
\end{enumerate}
For \autoref{thm:euc-unif-local}, choose $z = 0 \in \reals^d$, $b = 0$, and pick $R$ so that $R > 8 (C(d,p) + \vareps) / 5 (C(d,p) - \vareps)$ and $R > 4$ so that $e_1 \in B(z; R/4)$.

Note that $\cH_\lambda \cap B(z; R)$ may be realized as $\cX_N$ where $X_1,X_2,\dots$ are uniform i.i.d.\ random variables in $B(z; R)$, and thus $L(0,e_1; \cH_\lambda \cap B(z; R)) = L(0,e_1; \cX_N)$.
Therefore under the assumptions \autoref{it:append-1} and \autoref{it:append-2}, we have
\begin{equation*}
  \abs*{ \frac{L(0,e_1; \cH_\lambda \cap B(z; R))}{(Nf(z))^{(1-p)/d}} - C(d,p) } \leq \vareps,
\end{equation*}
and
\begin{equation*}
  \frac{L(0,u; \cH_\lambda \cap B(z; R))}{(N f(z))^{(1-p)/d}} \geq \frac58 R (C(d,p) - \vareps) > C(d,p) + \vareps
\end{equation*}
for all $u \notin B(0; R)$ by the choice of $R$.
Therefore the path $\cL_\lambda(0,e_1) = L(0,e_1; \cH_\lambda)$ is contained in $B(z; R)$ and
\begin{equation*}
  \abs*{ \frac{\cL_\lambda(0,e_1)}{(N f(z))^{(1-p)/d}} - C(d,p) } \leq \vareps.
\end{equation*}
Note that $f(z) = (V_d R^d)^{-1}$.
Using the condition \autoref{it:append-1} and $\BbbE N = \lambda V_d R^d$, we have
\begin{equation*}
  \frac{C(d,p) - \vareps}{(1+\delta)^{(p-1)/d}}
  \leq \frac{\cL_\lambda(0,e_1)}{\lambda^{(1-p)/d}}
  \leq \frac{C(d,p) + \vareps}{(1-\delta)^{(p-1)/d}}.
\end{equation*}
We can choose $\vareps$ and $\delta$ small enough so that
\begin{equation*}
  \abs*{ \frac{\cL_\lambda(0,e_1)}{\lambda^{(1-p)/d}} - C(d,p) } \leq s.
\end{equation*}

In summary, the probability in \eqref{eq:appendix-scale} is bounded above by the probability that either \autoref{it:append-1} or \autoref{it:append-2} does not occur.

From \autoref{thm:euc-unif-local}, there exists a constant $\theta_4 > 0$ such that $1 - \Pr(E_n(\vareps)) \leq \exp(-\theta_4 (n f(z))^\alpha)$ for all sufficiently large $n$.
By \autoref{it:append-2}, $n f(z) \geq (1-\delta) \lambda$.
Denote by $H_n$ the event of \autoref{it:append-1}.
Then for sufficiently large $\lambda = t^d$, \eqref{eq:appendix-scale} is bounded above by %(TODO: CHECK $\theta_4$!)
\begin{gather*}
  (1 - \Pr(H_n)) + 2 \delta \lambda V_d R^d \exp(-\theta_4 ((1-\delta) \lambda)^\alpha)
  \\
  = (1 - \Pr(H_n)) + \exp(-\theta_4 (1-\delta)^\alpha t^{d/(d+2p)} + O(\log t)).
\end{gather*}
Note that $1 - \Pr(H_n)$ is exponentially small in $n$ by the Chernoff bound \autocite[Theorem~9.3]{Billingsley:1995a}, so that the tail is dominated by the second term.

\printbibliography
\end{document}